\DeclareMathOperator{\var}{Var}
\DeclareMathOperator{\He}{He}
\newcommand{\E}{\mathbb{E}}
\newcommand{\C}{\mathbb{C}}
\renewcommand\Re{\operatorname{Re}}
\newcommand{\eps}{\varepsilon}
\newcommand{\R}{\mathbb{R}}
\newcommand{\N}{\mathbb{N}}
\newcommand{\Z}{\mathbb{Z}}
\def\R{\mathbb{R}}
\newcommand{\DD}{\mathfrak{D}}
\theoremstyle{plain}
\newtheorem{theorem}{Theorem}[section]
\newtheorem{conjecture}[theorem]{Conjecture}
\newtheorem{lemma}[theorem]{Lemma}
\newtheorem{corollary}[theorem]{Corollary}
\newtheorem{proposition}[theorem]{Proposition}
\theoremstyle{definition}
\newtheorem{definition}[theorem]{Definition}
\newtheorem{assumption}[theorem]{Assumption}
\theoremstyle{remark}
\newtheorem{remark}[theorem]{Remark}
\newcommand{\Cr}{\color{black}}
\newcommand{\Cb}{\color{black}}
\newcommand{\nc}{\normalcolor}
\newcommand{\cedittwo}{\color{black}}
\numberwithin{equation}{section}
\begin{document}
	
	\title[Universality for roots of derivatives of entire functions]{Universality for roots of derivatives of entire functions via finite free probability}
	
		\author{Andrew Campbell}
		\address{Institute of Science and Technology Austria, Am Campus 1, 3400 Klosterneuburg, Austria}
		\email{andrew.campbell@ist.ac.at}
		\thanks{A. Campbell partially supported by ERC Advanced Grant ``RMTBeyond'' No. 101020331}
		
		\author{Sean O'Rourke}
		\address{Department of Mathematics\\ University of Colorado\\ Campus Box 395\\ Boulder, CO 80309-0395\\USA}
		\email{sean.d.orourke@colorado.edu}
		\thanks{S. O'Rourke has been partially supported by NSF CAREER grant DMS-2143142. }
		
		\author{David Renfrew}
		\address{Department of Math and Statistics\\ Binghamton University (SUNY)\\ Binghamton, NY 3902-6000\\USA}
		\email{renfrew@math.binghamton.edu}

	\begin{abstract}
	A universality conjecture of Farmer and Rhoades [\textit{Trans. Amer. Math. Soc.}, 357(9):3789--3811, 2005] and Farmer [\textit{Adv. Math.}, 411:Paper No. 108781, 14, 2022] asserts that, under some natural conditions, the roots of an entire function should become perfectly spaced in the limit of repeated differentiation.  This conjecture is known as Cosine Universality.  We establish this conjecture for a class of even entire functions with only real roots which are real on the real line.  Along the way, we establish a number of additional universality results for Jensen polynomials of entire functions, including the Hermite Universality conjecture of Farmer [\textit{Adv. Math.}, 411:Paper No. 108781, 14, 2022].  Our proofs are based on finite free probability theory.  We establish finite free probability analogs of the law of large numbers, central limit theorem, and Poisson limit theorem for sequences of deterministic polynomials under repeated differentiation, under optimal moment conditions, which are of independent interest.  
	\end{abstract}

	\maketitle 
	
	\tableofcontents

\section{Introduction} \label{sec:intro}  Tracking the effects of differential operators on polynomial roots dates back implicitly to at least Rolle's theorem and explicitly to at least Gauss's \emph{electrostatic interpretation} of critical points, which gives a simple proof and intuitive explanation of the Gauss--Lucas theorem \cite{MR1954841}. If a degree $n$ polynomial $p$ of a single complex variable has distinct roots $z_{1},\dots,z_{n} \in \mathbb{C}$, then a root $z_{*} \in \mathbb{C}$ of $p'$ must satisfy \begin{equation}\label{eq:electro interp}
	\frac{p'(z_{*})}{p(z_{*})}=\sum_{k=1}^{n}\frac{1}{z_{*}-z_{k}}=0.
\end{equation} Thus, the roots of $p'$ lie at the equilibrium points of a field created by charges at the roots of $p$. This electrostatic interpretation, while quite simple, is extremely helpful both as a proof tool and as a heuristic for tracking roots under differentiation. In fact, this heuristic helps explain an observation, first attributed to Riesz \cite{Stoyanoff1926}, that if $p$ has only real roots, then the roots of $p'$ are more evenly spaced than those of $p$. Any areas of clumping in the roots will have a repulsive effect on the critical points through \eqref{eq:electro interp}. 

In some cases this electrostatic interpretation can be extended beyond polynomials to functions with an infinite number of roots, even if the sum in \eqref{eq:electro interp} does not converge in an absolute sense. For these functions this regularizing effect of differentiation leads to a natural conjecture, due to Farmer and Rhoades \cite{Farmer-Rhoades2005}, with further refinements described by Farmer \cite{Farmer2022}, that the roots of certain entire functions should become perfectly spaced in the limits of repeated differentiation; see Section \ref{sec:univ limits of diff} for more details. 

In this work, we prove this {\cedittwo \emph{Cosine Universality Conjecture} (Conjecture \ref{conj:cos})} \Cr and the \emph{Hermite Universality \Cr Conjecture\nc} (\Cr Conjecture \nc \ref{princ:Hermite uni}) described in \cite[Principle 3.3]{Farmer2022} \nc for a large class of even entire functions through \emph{finite free probability} and the recent body of work  \cite{Steinerberger2019,Steinerberger2020,Hoskins-Kabluchko2021,Hoskins-Steinerberger2022,Arizmendi-GarzaVargas-Perales2023,Shlaykhtenko-Tao2020,Campbell-ORourke-Renfrew2024,Gorin-Marcus2020,Gorin-Kleptsyn2024}, connecting polynomial roots under differentiation to operations in random matrix theory and free probability. \Cr We additionally introduce, and prove under our assumptions, a \emph{Laguerre Universality Conjecture} (Conjecture  \ref{princ:Laguerre universality})  which implies the  conjecture of Farmer and Rhoades \cite{Farmer-Rhoades2005}.  \nc

 We believe finite free probability can play a further role in the study of entire functions in the \emph{Laguerre--P\'olya} class, i.e., functions $f$ which can be represented as \begin{equation*}
	f(z)=C_1z^{m}e^{c_1z-c_2z^2}\prod_{k=1}^{N}\left(1+\frac{z}{x_k}\right)e^{-\frac{z}{x_k}},
\end{equation*} for $C_1,c_1,x_1,x_2,\ldots\in\R$, $c_2\geq 0$, $m\in\N \cup \{0\}$, and $N\in\N\cup\{\infty\}$. Functions in the Laguerre--P\'olya class are uniform limits of polynomials, and thus enjoy many properties of polynomials, for example their roots interlace with the roots of their derivative.  As finite free probability is a newly developing field, we do not assume the reader has any familiarity with it, and provide a concise introduction in Section \ref{sec:Finite free probability}.  \Cr Our results include several finite free probabilistic limit theorems which are of independent interest, giving a generalization of both Griffin, Ono, Rolen, and Zagier \cite{Griffin-Ono-Rolen-Zagier2019} and Hoskins and Steinerberger \cite{Hoskins-Steinerberger2022}. \nc


\subsection{Universal limits of differentiation for the Laguerre--P\'olya class}\label{sec:univ limits of diff}


%
We now discuss the Cosine Universality conjecture of Farmer and Rhoades \cite{Farmer-Rhoades2005}\footnote{ A version of Cosine Universality appeared as a claimed result in \cite{Farmer-Rhoades2005}, however, as pointed out in \cite{Pemantle-Subramanian2017}, a key step contains an error. }, see also \cite{Berry2005} and \cite{Farmer2022}. 


{\cedittwo 
\begin{conjecture}[Cosine Universality, \cite{Farmer-Rhoades2005}] \label{conj:cos}
Given an entire function, $f$, which is real on the real line with only real zeros, such that the number of zeros of $f$ in $(0,r)$ and $(-r,0)$ grows sufficiently nicely, then the zeros of $f^{(n)}$ approach perfect spacing as $n\rightarrow \infty$. Furthermore, the appearance of perfect spacing arises through cosine, namely there exist real sequences $A_n,B_n,\kappa_n,D_n$ with $D_n$ bounded such that \begin{equation}\label{eq:cosine convergence}
	\lim_{n\rightarrow\infty} A_ne^{B_nz}f^{(n)}(\kappa_n z+D_n)= \cos(\pi z).
\end{equation} 
\end{conjecture}
   }

	Universal attractors of differentiation have also been explored by Berry \cite{Berry2005}, who expanded the idea to more general functions by moving away from real rooted-ness. Based on the work of Griffin, Ono,
	Rolen, and Zagier \cite{Griffin-Ono-Rolen-Zagier2019}, Farmer \cite{Farmer2022} proposed a potential refinement of Cosine Universality for even entire functions, dubbed in \cite{Farmer2022} the \emph{Hermite Universality Conjecture}, where universal attractors can be observed by the appearance of Hermite polynomials as the limits of what Farmer refers to as \emph{even Jensen polynomials}. See Section \ref{sec:Jesen polynomials and universality}, below, for details on Jensen polynomials and their universality under \Cr differential operators.\nc

	Our main tool for proving Cosine and Hermite Universality is \emph{finite free probability theory}. This theory emerged out of the celebrated works of Marcus, Spielman and Srivastava \cite{Marcus-Spielman-Srivastava2015-1,Marcus-Spielman-Srivastava2015-2,Marcus-Spielman-Srivastava2018,Marcus-Spielman-Srivastava2022,Marcus-Spielman-Srivastava2022Inter} on families of interlacing polynomials, which they used to prove the existence of bipartite Ramanujan graphs of all sizes and degrees, and to solve the Kadison--Singer problem \cite{Kadison-Singer1959}. 
	
	In addition to Cosine and Hermite Universality, we introduce \Cr a third kind of universality, \nc which we refer to as \emph{Laguerre Universality}, and demonstrate how these \Cr notions of \nc universality  have natural interpretations as (finite free) probabilistic limit theorems. Specifically, our main results in Section \ref{sec: main results uni principles} prove all three universality conjectures  for even entire functions using only growth conditions of the roots. 
	
	%

	Our probabilistic approach is motivated by the recent connection between free probability and repeated differentiation of polynomials \cite{Steinerberger2019,Steinerberger2020,Hoskins-Kabluchko2021,Hoskins-Steinerberger2022,Kabluchko2021,Arizmendi-GarzaVargas-Perales2023,Campbell-ORourke-Renfrew2024}. We prove the related finite free limits theorems for general families of real-rooted polynomials in Section \ref{sec:finite free limit theorems}, which are of independent interest, and apply these results to the universality conjectures. The statements of these theorems require no background in finite free probability, but interpreting them as probabilistic limit theorems does. 
	
	\subsection{Jensen polynomials and universality}\label{sec:Jesen polynomials and universality} Motivated partially by their role in an equivalent statement of the Riemann Hypothesis, from \cite{Polya1927}, there has been some renewed interest in the \emph{Jensen polynomials} of an entire function, $f$, which are a sequence of polynomials that uniformly approximate $f$ on compact sets.  However, as pointed out by Farmer \cite{Farmer2022} there are two commonly used Jensen polynomials. We follow the terminology from \cite{Farmer2022} to distinguish between the two possible choices. In this work, we use finite free probability to study repeated differentiation of Jensen polynomials, which, after taking limits, allows us to study repeated differentiation of entire functions.
	
	We now introduce the classical Jensen polynomials. Consider the series representation of an entire function $f$:  \begin{equation}\label{eq:series definition of f}
		f(z)=\sum_{k=0}^{\infty} \gamma_{k}\frac{z^k}{k!}.
	\end{equation}
	
	\begin{definition}\label{def:classical Jensen}
		Let $f$ be as in \eqref{eq:series definition of f}. Then, for $d,n\in\N$ the \emph{degree $d$ classical Jensen polynomial with shift $n$ of $f$} is defined by \begin{equation}
			C_{d,n}(z):=\sum_{k=0}^{d}\binom{d}{k}\gamma_{k+n}z^k.
		\end{equation}
	\end{definition} 
	
	{ See \cite{Craven-Csordas1989} and references therein for some background on Jensen polynomials.} Not only do Jensen polynomials of $f$ give a polynomial approximation of $f$, but if $f$ is an entire function of order less than $2$, then the classical Jensen polynomial with shift $n$ of $f$ approximates the $n^{th}$ derivative of $f$ in the sense that, uniformly on compact subsets we have:
	\begin{equation}
		\lim\limits_{d\rightarrow\infty}C_{d,n}\left(\frac{z}{d}\right)=f^{(n)}(z).
	\end{equation}
	Furthermore, the shifted Jensen polynomials are easy to track through differentiation. Namely, \begin{equation}\label{eq:classical diff def}
		C_{d,n}(z)=\frac{d!}{(n+d)!}\left(\frac{d}{dz}\right)^{n}C_{d+n,0}(z).
	\end{equation} In fact, one could take \eqref{eq:classical diff def} as the definition of shifted Jensen polynomials after defining the unshifted version $C_{m,0}$. 
	The Jensen polynomials are useful for studying roots of entire functions because if $f$ is an entire function of order less than $2$, then $f$ has only real zeros if and only if  $C_{d,0}$ has only real zeros for any $d\in\N$.
	
		%
		%
		For an even entire function
		\begin{equation}\label{eq:even series representation}
			f(z):=\sum_{k=0}^\infty \gamma_{2k}\frac{z^{2k}}{(2k)!}=\sum_{k=0}^\infty \eta_{k}\frac{z^{2k}}{k!},
		\end{equation} there is a more commonly used choice of Jensen polynomials, given by  what Farmer refers to as the \emph{even Jensen polynomials} of $f$. \begin{definition}\label{def:even Jensen}
			Let $f$ be as in \eqref{eq:even series representation}. Then, for $d,n\in\N$ the \emph{degree $d$ even Jensen polynomial with shift $n$ of $f$} is defined by \begin{equation} \label{eq:Jensendef}
				J_{d,n}(z):=\sum_{k=0}^{d}\binom{d}{k}\eta_{k+n}z^k.
			\end{equation}
		\end{definition} It is straightforward to see that the even Jensen polynomials of $f$ are the classical Jensen polynomials of the positive rooted function $g(z)=f(\sqrt{z})$. Hence, the even Jensen polynomials also satisfy \begin{equation}\label{eq:Jensen diff relation}
			J_{d,n}(z)=\frac{d!}{(n+d)!}\left(\frac{d}{dz}\right)^n J_{d+n,0}(z).
		\end{equation} 
		We note that in contrast to the classical case, the shifted even Jensen polynomials $J_{d,n}$ no longer have a direct connection to the $n^{th}$ derivative of $f$. To circumvent this issue we will define another set of polynomials in \eqref{eq:Even polynomials under differentiation} which will converge to $f^{(n)}$. Nevertheless, the shifted even Jensen polynomials are well studied, in particular \cite{Polya1927}, see also  \cite{Griffin-Ono-Rolen-Zagier2019}, showed that the Riemann Hypothesis is equivalent to the even Jensen polynomials of the Riemann $\Xi$-function having real zeros for all $d$.

		 In \cite{Griffin-Ono-Rolen-Zagier2019}, the team of Griffin, Ono, Rolen, and Zagier prove that the even Jensen polynomials converge to Hermite polynomials as $n\rightarrow\infty$ given certain assumptions on the coefficients $\eta_{k+n}$. This is of particular interest in that it implies that $J_{d,n}$ is hyperbolic, i.e., it has only real roots, for sufficiently large $n$. They then apply this to the Riemann $\Xi$-function.  Motivated by the result of \cite{Griffin-Ono-Rolen-Zagier2019}  for the Riemann $\Xi$-function, Farmer \cite[Principle 3.3]{Farmer2022} proposed the following. 
		\begin{conjecture}[Hermite Universality, \cite{Farmer2022}]\label{princ:Hermite uni}
			For a large class of functions $f$ (such as those considered in the Cosine Universality conjecture discussed above), there should exist sequences $\mathcal{A}_n,\mathcal{B}_n$, and $\mathcal{C}_n$ such that uniformly on compact subsets\begin{equation}
				\lim_{n\rightarrow\infty}\mathcal{A}_{n} J_{d,n}\left(\mathcal{C}_nz+\mathcal{B}_{n} \right)=\He_{d}(z),
			\end{equation} where $\He_{d}$ is the degree $d$ Hermite polynomial \begin{equation}\label{eq:Hermite def}
				\He_{d}(z):=\sum_{k=0}^{\lfloor \frac{d}{2}\rfloor}\frac{d!(-1)^{k}}{k!(d-2k)!}\frac{z^{d-2k}}{2^{k}}.
			\end{equation}
		\end{conjecture} Given that Conjecture \ref{princ:Hermite uni} and the results of \cite{Griffin-Ono-Rolen-Zagier2019} are formulated for even functions, 
		we will focus on the even Jensen polynomials and not the classical Jensen polynomials of $f$. In Theorem \ref{thm:Hermite uni}, we verify Conjecture \ref{princ:Hermite uni} and give explicit formulas for $\mathcal{A}_n,\mathcal{B}_n$, and $\mathcal{C}_n$ in terms of the coefficients $\{\eta_{k}\}_{k=0}^\infty$ from \eqref{eq:even series representation}. 
		However, we do not see a direct connection between Conjecture \ref{princ:Hermite uni} and Cosine Universality, as taking $n \to \infty$ leads to information about derivatives of $g$, not $f$. Additionally, despite the fact that in certain limits, the Hermite polynomials converge to cosine, we do not see how the Hermite polynomials are related to the even Jensen polynomials of cosine. In fact, \cite{Dimitrov-Youssef2009} showed that the Hermite polynomials are not the Jensen polynomials of any function in the Laguerre--P\'olya class. However, the Laguerre polynomials, which are related to the Hermite polynomials, see \eqref{eq:Laguerre -1/2 def} and Proposition \ref{prop:cosine as finite free poisson}, do appear naturally in our approach.
		
		
		To make the connection between  even Jensen polynomials  and Cosine Universality more explicit, we propose, and verify in Theorem \ref{thm:Laguerre uni}, the following universality conjecture for even Jensen polynomials. 
		\begin{conjecture}[Laguerre Universality]\label{princ:Laguerre universality}
			For a large class of functions $f$, the unshifted even Jensen polynomials of $f^{(2n)}$ converge, after rescaling, to the generalized Laguerre polynomials \begin{equation}\label{eq:Laguerre -1/2 def}
				L_{d}^{\left(-\frac{1}{2}\right)}(z)=\sum_{k=0}^{d}\frac{(-1)^{k}\left(d-\frac{1}{2}\right)_{d-k} }{k!(d-k)!}z^k.
			\end{equation}
		\end{conjecture} To see why Laguerre Universality is directly connected to Cosine Universality we point out the following proposition whose proof follows from direct computation of coefficients. As we will see in Section \ref{sec:Finite free probability}, the even Jensen polynomials of cosine have an extremely natural position in finite free probability theory. 
		
		\begin{proposition}\label{prop:cosine as finite free poisson}
			Let $\{J_{d,0}\}_{d=0}^{\infty}$ be the even Jensen polynomials of $\cos(z)$ of shift $0$ (so that $J_{d,0}$ is a polynomial of degree $d$). Then \begin{equation}
				J_{d,0}(z)=4^{d}\frac{(d!)^2}{2d!}L_{d}^{\left(-\frac{1}{2}\right)}\left(\frac{z}{4}\right)
			\end{equation} and \begin{equation}
				J_{d,0}(z^{2})=(-2)^{d}\frac{d!}{2d!}\He_{2d}\left(\frac{z}{\sqrt{2}}\right),
			\end{equation}
			where $\He_{2d}$ and $L_{d}^{\left(-\frac{1}{2}\right)}$ are defined in \eqref{eq:Hermite def} and \eqref{eq:Laguerre -1/2 def}, respectively. 
		\end{proposition}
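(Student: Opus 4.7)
The plan is to verify both identities by direct coefficient comparison, using only the explicit series for $\cos(z)$, $L_d^{(-1/2)}$, and $\mathrm{He}_{2d}$. Since $\cos(z)=\sum_{k=0}^\infty \frac{(-1)^k}{(2k)!}z^{2k}$, the coefficients $\eta_k$ from the representation \eqref{eq:even series representation} are $\eta_k=\frac{(-1)^k k!}{(2k)!}$. Substituting into Definition \ref{def:even Jensen} gives the closed form
\[
J_{d,0}(z)=d!\sum_{k=0}^{d}\frac{(-1)^k}{(d-k)!\,(2k)!}\,z^k,
\]
which will be the common expression I match against both the Laguerre and Hermite sides.

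For the Laguerre identity, I would rewrite the Pochhammer factor $(d-\tfrac{1}{2})_{d-k}$ appearing in \eqref{eq:Laguerre -1/2 def} as a product of half-integers and convert it to factorials via the double factorial identity $(2m-1)!!=\frac{(2m)!}{2^m m!}$. After simplification, the coefficient of $z^k$ in $L_d^{(-1/2)}(z)$ becomes $(-1)^k\,2^{2k-2d}(2d)!/(d!\,(2k)!\,(d-k)!)$. Substituting $z\mapsto z/4$ eliminates the $2^{2k}$ factor, yielding
\[
L_d^{(-1/2)}\!\left(\tfrac{z}{4}\right)=\frac{(2d)!}{4^d d!}\sum_{k=0}^{d}\frac{(-1)^k}{(2k)!(d-k)!}z^k,
\]
and comparison with the expression for $J_{d,0}(z)$ gives precisely the stated normalizing factor $4^d (d!)^2/(2d)!$.

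For the Hermite identity, I would start from \eqref{eq:Hermite def} with $d$ replaced by $2d$, then reindex by $j=d-k$ so that the sum runs over powers $z^{2j}$ rather than $z^{2d-2k}$. This gives
\[
\mathrm{He}_{2d}(z)=\sum_{j=0}^{d}\frac{(2d)!(-1)^{d-j}}{(d-j)!\,(2j)!\,2^{d-j}}z^{2j}.
\]
Setting $z\mapsto z/\sqrt{2}$ multiplies the $z^{2j}$ term by $2^{-j}$, which combines with $2^{-(d-j)}$ to produce the uniform factor $2^{-d}$. Pulling out $(-1)^d 2^{-d}(2d)!$ leaves the same inner sum $\sum_{j=0}^d (-1)^j z^{2j}/((d-j)!(2j)!)$ as in $J_{d,0}(z^2)/d!$, giving the factor $(-2)^d d!/(2d)!$ claimed.

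There is no real obstacle: both identities reduce to a bookkeeping exercise once the Pochhammer term in $L_d^{(-1/2)}$ is converted via double factorials and once the Hermite sum is reindexed so that its powers match those of $J_{d,0}(z^2)$. The only point requiring mild care is the consistent handling of the $2^{2k-2d}$ prefactor in the Laguerre coefficient and the simultaneous scaling $z\mapsto z/4$ (resp.\ $z\mapsto z/\sqrt{2}$ for Hermite), which together are arranged precisely so that a single power of $2$ factors out of the entire sum and produces the scalar constants displayed in the proposition.
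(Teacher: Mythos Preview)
Your proposal is correct and matches the paper's approach: the paper states that the proposition ``follows from direct computation of coefficients'' and gives no further details, which is exactly the coefficient-matching computation you have carried out. The bookkeeping with the Pochhammer/double-factorial conversion for the Laguerre side and the reindexing $j=d-k$ for the Hermite side are the natural steps, and your handling of the powers of $2$ under the two rescalings is correct.
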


		For Hermite Universality, \eqref{eq:Jensen diff relation} provides the means for computing the $n\rightarrow\infty$ limit of $J_{d,n}$. In order to study the even Jensen polynomials of $f^{(2n)}$, we define the differential operator \begin{equation}\label{eq:M definition}
			M:=2\left(D+2zD^2 \right),
		\end{equation} where $D=\frac{d}{dz}$ and $z$ is the multiplication operator. 
		By noting that for $g(z)=f(\sqrt{z})$ so that \[ f''(\sqrt{z}) = 4z g''(z) + 2 g'(z) = M g(z),\] it follows that if $W_{d,n}$ are the unshifted even Jensen polynomials of $f^{(2n)}$, then we have the relationship \begin{equation}\label{eq:Even polynomials under differentiation}
			W_{d,n}(z)=\frac{d!}{(n+d)!}M^nW_{n+d,0}(z),\quad W_{m,0}(z)=J_{m,0}(z).
		\end{equation} 
		We will always use $J_{d,n}$ to denote the degree $d$ even Jensen polynomial of $f$ with shift $n$ and $W_{d,n}$ the degree $d$ even unshifted Jensen polynomial of $f^{(2n)}$. The former corresponds to Hermite Universality and the latter to Laguerre Universality.
		
		%
			%
			
			We will prove Cosine Universality (Theorem \ref{thm:Cosine uni}) using Laguerre Universality (Theorem \ref{thm:Laguerre uni}). We now present a heuristic argument, that uses the double limits in $n$ and $d$ and for the sake of presentation ignores constants:
			\begin{equation*}
				f^{(2n)}(z)=g_{n}(z^2)\overset{d\gg1}{\approx} W_{d,n}\left(\frac{z^2}{d}\right)
				\overset{n\gg 1}{\approx}d!L^{(-\frac{1}{2})}_{d}\left(\frac{z^2}{4d}\right)
				=\He_{2d}\left(\frac{z}{\sqrt{2d}} \right)
				\overset{d\gg1}{\approx}\cos(z), 
			\end{equation*} where $g_n(z)=f^{(2n)}(\sqrt{z})$. 
			The first $\approx$ follows because $W_{d,n}$ are the classical Jensen polynomials of $g_n(z)$ and the second $\approx$ is Laguerre Universality. The following equality and last $\approx$ are well-known properties of Hermite polynomials.
			Our proof avoids taking the double limit but morally follows this argument.


			We conclude this section with an outline of the remainder of the paper. In Section \ref{sec:main results}, we formally state our Cosine, Hermite, and Laguerre Universality theorems, as well as our finite free limit theorems for derivatives of polynomials. In Section \ref{sec:Finite free probability}, we present the necessary background on finite free probability. In Section \ref{sec:proofsuni}, we prove the Cosine, Hermite, and Laguerre Universality theorems, and in Section \ref{sec:limit theorems proofs} we prove the finite free limit theorems for derivatives of polynomials. In Section \ref{sec:proof_of_coefficients} we prove a technical result concerning the coefficients of entire functions.  We conclude with examples in Section \ref{sec:examples}.
			
	{\cedittwo		\subsection{Notation} We employ asymptotic notation $O(\cdot)$, $o(\cdot)$, $\lesssim$, $\sim$, etc.\ under the assumption that some sequence index, such as $n$ or $m$, tends to infinity. We write $c_n=O(b_n)$ or $c_n \lesssim b_n$ if there exists some constant $C>0$ such that $|c_n|\leq C|b_n|$ for all $n > C$, $c_n=o(b_n)$ if $\frac{c_n}{b_n}\rightarrow0$, and $c_n\sim b_n$ if $\frac{c_n}{b_n}\rightarrow1$.

			\section{Main results}\label{sec:main results} We divide our main results into two categories. The first, given in Section \ref{sec: main results uni principles}, concerns the Cosine, Hermite, and Laguerre Universality Conjectures (Conjectures \ref{conj:cos}, \ref{princ:Hermite uni} and \ref{princ:Laguerre universality}). Confirming the latter two of these universality conjectures is an application of our second category of main results, given in Section \ref{sec:finite free limit theorems}, which we call finite free limit theorems of differentiation. However, the statements of these limit theorems require no knowledge of finite free or even classical probability. Readers familiar with finite free probability will be able to see why we describe these as a law of large numbers, central limit theorem, and Poisson limit theorem. 
			
			\subsection{Results on the universality of entire functions and Jensen polynomials}\label{sec: main results uni principles} 
			We begin with the exact conditions we wish to assume about the even entire function $f$. {\cedittwo For $r > 0$, we denote by $n_+(r)$ the number of real roots of $f$ in the interval $[0,r]$.} 
			
			\begin{assumption}\label{assump:root density}
				We say an even entire function $f$ satisfies Assumption \ref{assump:root density} if $f$ is order less than $2$, real on the real line, only has real roots, and \Cb $n_+$ is regularly varying, meaning there exists $\alpha\in (0,2)$ such that for every $x>0$ \begin{equation}\label{eq:root density assump}
						\lim\limits_{r\rightarrow\infty}\frac{n_{+}(xr)}{n_{+}(r)}=x^{\alpha}.
				\end{equation} \nc  
			\end{assumption} 
			
			{\cedittwo  In Lemma \ref{lem:coefficients from roots}, we show our assumption on the roots in \eqref{eq:root density assump}, implies the growth of the coefficients of $f$ is governed by $\alpha$.}
			\Cr The parameter $\alpha\in (0,2)$ determines the order of the function $f$ and the scale on which Cosine, Laguerre, and Hermite Universality hold.  The condition \eqref{eq:root density assump} implies that $n_+$ is \emph{regularly varying}, a property that arises naturally in probability theory, and we refer the reader to \cite{Resnick87} and references therein for more information. \nc \Cb For $f$ satisfying Assumption \ref{assump:root density}, we define the generalized inverse $n_{+}^{-1}(y)=\inf\{r>0: n_{+}(r)\geq y \}$ and sequence $a_{n}>0$ by 
			\begin{equation} 
				\label{eq:defan}
				a_{n}=\left[\frac{1}{2n}n_{+}^{-1}\left(\frac{2n}{\pi\alpha\csc(\pi\alpha/2)}\right)\right]^2.
			\end{equation}  Since  $n_{+}$ is regularly varying, by Assumption \ref{assump:root density} $a_{n}$ is also regularly varying, see \cite[Proposition 0.8]{Resnick87}, and thus we can write $a_{n}=\widetilde{h}(n)n^{-2+\frac{2}{\alpha}}$ for some \emph{slowly varying} function $\widetilde{h}$, i.e.\ $\widetilde{h}$ satisfies \eqref{eq:root density assump} with $\alpha=0$.\nc 
			
			
			
			We now state our main universality results, whose proofs are given in Section \ref{sec:proofsuni}.  While we state it for even functions, the first result can be applied to odd functions by first taking one derivative. 
		
			\begin{theorem}[Cosine Universality for even functions]\label{thm:Cosine uni}
				Let $f$ be an even entire function that satisfies Assumption \ref{assump:root density}. \Cr Then, Conjecture \ref{conj:cos} holds, specifically, \nc \begin{equation}\label{eq:converge to cosine}
					\lim\limits_{n\rightarrow\infty}\frac{1}{  f^{(2n)}(0)  } f^{(2n)}(\sqrt{a_n} z)=\cos(z),
				\end{equation} uniformly on compact subsets of the complex plane, where $a_n$ is given in \eqref{eq:defan}.
			\end{theorem}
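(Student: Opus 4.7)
The plan is to reduce the theorem to two statements about the Taylor coefficients of $F_n(z):=\gamma_{2n}^{-1}f^{(2n)}(\sqrt{a_n}\,z)$: (i) pointwise (in $k$) convergence of each coefficient to that of $\cos z$, and (ii) a uniform-in-$n$ summable majorant on the coefficients. Given both, uniform convergence on compact subsets of $\C$ follows immediately from the triangle inequality applied to the Taylor series after truncating at a large finite order. At the heuristic level this is consistent with the route through Laguerre Universality (Theorem~\ref{thm:Laguerre uni}) and Proposition~\ref{prop:cosine as finite free poisson} sketched in the introduction, but the limiting coefficients can be read off directly from Lemma~\ref{lem:coefficients from roots}.

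Since $f$ is even, $f^{(2n)}(z)=\sum_{k\ge 0}\gamma_{2(n+k)}z^{2k}/(2k)!$, so $F_n(z)=\sum_k c_{k,n}z^{2k}$ with $c_{k,n}=\gamma_{2(n+k)}a_n^k/(\gamma_{2n}(2k)!)$. Writing $r_n:=\gamma_{2n}/\gamma_{2(n-1)}$, the defining relation~\eqref{eq:defan} gives $a_n r_{n+1}\to-1$, while~\eqref{eq:ratio of coeff ratio condition} from Lemma~\ref{lem:coefficients from roots} gives $r_{n+1}/r_n\to 1$. Iterating the latter, $r_{n+j}/r_n\to 1$ for every fixed $j$, so $a_n r_{n+j}=(a_n r_{n+1})\cdot(r_{n+j}/r_{n+1})\to-1$, and the telescoped product
\begin{equation*}
\frac{\gamma_{2(n+k)}}{\gamma_{2n}}\,a_n^k=\prod_{j=1}^{k}(a_n r_{n+j})\longrightarrow(-1)^k
\end{equation*}
for each fixed $k$, which is the desired coefficient-wise convergence $c_{k,n}\to(-1)^k/(2k)!$.

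For the tail bound, the quantitative estimate~\eqref{eq:coeff ratio} gives $|r_n|\sim 4c_\alpha n^{2-2/\alpha}\tilde h(n)^{2/\alpha}$ with $\tilde h$ slowly varying, and since $a_n\sim 1/|r_{n+1}|$ we obtain
\begin{equation*}
a_n|r_{n+j}|\sim\left(1+\frac{j}{n}\right)^{2-2/\alpha}\left(\frac{\tilde h(n+j)}{\tilde h(n)}\right)^{2/\alpha}.
\end{equation*}
Applying Potter's bound to the slowly varying factor, for any $\epsilon>0$ this is bounded by a constant times $(1+j/n)^{p}$ with $p:=2-2/\alpha+2\epsilon/\alpha$. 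The strict inequality $\alpha<2$ allows us to fix $\epsilon$ so that $p<1$, after which a Stirling-type estimate on $\prod_{j=1}^{k}(1+j/n)^{p}=((n+k)!/(n!\,n^k))^{p}$ against $(2k)!$ produces a uniform-in-$n$ summable bound of the form $|c_{k,n}|R^{2k}\le M_k$ with $\sum_k M_k<\infty$, for every fixed $R>0$.

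I expect the main obstacle to lie in executing this tail bound carefully: the qualitative behaviour of $|r_{n+j}|$ as $j$ grows changes sign with $2-2/\alpha$ (decaying for $\alpha<1$, growing for $\alpha>1$), so the slowly varying function $\tilde h$ must be handled uniformly across both regimes via Potter's theorem, and the bound must be strong enough that $(2k)!$ dominates even when $k$ is comparable to or larger than $n$. Coefficient-wise convergence, by contrast, is a direct iteration of the ratio limits in Lemma~\ref{lem:coefficients from roots}, and assembling (i) and (ii) into uniform convergence on compact subsets of $\C$ is then a routine truncation argument on the Taylor series.
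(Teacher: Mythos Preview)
Your proposal is correct, and the overall architecture---coefficient-wise convergence plus a uniform tail bound, then a truncation argument---is exactly what the paper does. The tail estimate is also essentially the same: your use of Potter's bound is the standard packaging of the Karamata representation the paper invokes, and in both cases the key numerical fact is that $\alpha<2$ forces the exponent governing the product $\prod_{j=1}^{k}a_n|r_{n+j}|$ to stay strictly below $2$, so that $(2k)!$ wins.

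The one genuine difference is how you obtain the coefficient-wise convergence. The paper deduces it from Laguerre Universality (Theorem~\ref{thm:Laguerre uni}) together with Proposition~\ref{prop:cosine as finite free poisson}: since the even Jensen polynomials of $\gamma_{2n}^{-1}f^{(2n)}(\sqrt{a_n}\,z)$ converge to those of $\cos z$, the first $2d$ Taylor coefficients converge for every fixed $d$. You instead read the limit directly off Lemma~\ref{lem:coefficients from roots} by telescoping the ratios $a_n r_{n+j}$. Your route is shorter and entirely elementary---it never touches the finite free machinery---which is a real advantage if one only cares about Theorem~\ref{thm:Cosine uni}. The paper's route, on the other hand, makes explicit that Cosine Universality is a corollary of Laguerre Universality, which is the conceptual point the authors want to emphasize. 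Either argument is complete once the tail bound is in place; just be sure when you write it up to make the constants in Potter's bound and in the $o(1)$ of~\eqref{eq:coeff ratio} explicit enough that the resulting $C^{k}$ factors are visibly absorbed by $(2k)!$.
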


			\begin{remark}
			\Cr	Convergence to cosine under repeated differentiation  has previously been proven for the Riemann $\Xi$-function and some generalizations by Ki \cite{Ki2006}. Gunns and Hughes \cite{Gunns-Hughes2019} proved this conjecture for functions in the extended Selberg class; {\cedittwo both papers rely on the special form of the Fourier transform of such functions.} Pemantle and Subramanian \cite{Pemantle-Subramanian2017}  proved this conjecture for a random function whose roots are distributed according to a homogeneous Poisson point process, {\cedittwo using the particularly nice factorization of such functions to recover useful coefficient information. Our approach
is similar, in that we use the nice factorization available for even functions, but our finite free limit theorems require such mild conditions that we are able to consider general collections of deterministic roots. 

Additionally, these previous results, as well as} the original conjecture of \cite{Farmer-Rhoades2005}, were all stated for functions  where $\alpha=1$. Our result proves that perfect spacing occurs (on scales determined by $\alpha$) for functions up to order $2$. 
			
			Beyond order $2$, repeated differentiation will create roots away from the real line \cite{MR2296054} and there is no reason to expect perfect spacing of roots on $\R$.  		
			We believe some regularity on $n_{+}$ is required, and the regularly varying condition appears natural given \eqref{eq:electro interp}. Specifically, a critical point $z_c$ of $f$ should satisfy \begin{equation*}
				\frac{f'(z_c)}{f(z_c)}=0,
			\end{equation*} and this fact should drive the dynamics of the roots towards perfect spacing. Convergence of $\frac{1}{n}\frac{f'(nz)}{f(nz)}$ on $\C\setminus\R$ is essential to the proof in \cite{Pemantle-Subramanian2017}. We use only one of the directions, but \eqref{eq:root density assump} is in fact necessary and sufficient for $\frac{1}{n}\frac{f'(b_nz)}{f(b_nz)}$ to converge for some sequence $b_n>0$. It is possible that Theorem \ref{thm:Cosine uni} holds under a weaker condition on $n_{+}$, but it is not clear to us how to determine the scale $a_n$ without the regular variation assumption.
			\end{remark}  \nc
		
			Theorem \ref{thm:Cosine uni} will be proven as a corollary of the following theorem.
			\begin{theorem}[Laguerre Universality]\label{thm:Laguerre uni}
				Let $f$ be an even entire function, as represented in \eqref{eq:even series representation}, that satisfies Assumption \ref{assump:root density} and fix $d\in\N$.  Let $W_{d,n}$ be the even Jensen polynomials of $f^{(2n)}$, as in \eqref{eq:Even polynomials under differentiation}. Then, \Cr Conjecture \ref{princ:Laguerre universality} holds, specifically, \nc 
				\begin{equation}\label{eq:even Jensen Lag limit}
					\lim\limits_{n\rightarrow\infty} \frac{1}{\gamma_{2n}}W_{d,n}\left(a_nz\right)= 4^{d}\frac{(d!)^2}{2d!}L_{d}^{\left(-\frac{1}{2}\right)}\left(\frac{z}{4}\right),
				\end{equation} uniformly on compact subsets of the complex plane, where $a_n$ is given in \eqref{eq:defan}. Moreover, the polynomials on the right-hand side of \eqref{eq:even Jensen Lag limit} are the even Jensen polynomials of $\cos(z)$.
			\end{theorem}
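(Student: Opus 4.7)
The plan is to compute $W_{d,n}$ in closed form and reduce the convergence in \eqref{eq:even Jensen Lag limit} to an asymptotic statement about consecutive ratios $\gamma_{2(m+1)}/\gamma_{2m}$, which can then be handled with Lemma \ref{lem:coefficients from roots}. The finite free probabilistic picture, developed in Section \ref{sec:finite free limit theorems}, is that this is a finite free Poisson limit theorem, with the Laguerre polynomials $L_{d}^{(-\frac{1}{2})}$ playing the role of the Poisson distribution (consistent with Proposition \ref{prop:cosine as finite free poisson}).

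First I would derive, by induction starting from $M z^{j} = 2j(2j-1) z^{j-1}$, the closed form
\begin{equation*}
M^{n} z^{j} = \frac{(2j)!}{(2(j-n))!}\, z^{j-n}, \qquad j \geq n.
\end{equation*}
Substituting this into $W_{d,n}(z) = \frac{d!}{(n+d)!} M^{n} J_{n+d,0}(z)$ from \eqref{eq:Even polynomials under differentiation}, and expanding $J_{n+d,0}$ via $\eta_{k} = k!\gamma_{2k}/(2k)!$, a cancellation of binomial and factorial factors yields the explicit identity
\begin{equation*}
W_{d,n}(z) = \sum_{j=0}^{d} \frac{d!}{(d-j)!\,(2j)!}\, \gamma_{2(n+j)}\, z^{j}.
\end{equation*}
In particular $\gamma_{2n}^{-1} W_{d,n}(a_{n} z)$ is a polynomial of fixed degree $d$ whose $j$-th coefficient equals $\frac{d!}{(d-j)!\,(2j)!} \cdot (\gamma_{2(n+j)}/\gamma_{2n})\, a_{n}^{\,j}$.

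The analytic heart of the proof is then the termwise convergence $(\gamma_{2(n+j)}/\gamma_{2n})\, a_{n}^{\,j} \to (-1)^{j}$ for each fixed $j \in \{0,1,\dots,d\}$. The case $j=1$ is the definition \eqref{eq:defan} of $a_{n}$; to push this to general $j$ I would telescope $\gamma_{2(n+j)}/\gamma_{2n} = \prod_{i=0}^{j-1} \gamma_{2(n+i+1)}/\gamma_{2(n+i)}$ and compare each factor to $\gamma_{2(n+1)}/\gamma_{2n}$. The key input is the ratio asymptotic \eqref{eq:ratio of coeff ratio condition} of Lemma \ref{lem:coefficients from roots}, which, applied inductively, gives $\gamma_{2(n+i+1)}/\gamma_{2(n+i)} = (\gamma_{2(n+1)}/\gamma_{2n})(1+o(1))$ for each fixed $i \leq d-1$. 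Since only finitely many such perturbations multiply, the telescoped product equals $(\gamma_{2(n+1)}/\gamma_{2n})^{j}(1+o(1))$, and multiplying by $a_{n}^{\,j}$ produces $(-1)^{j}+o(1)$.

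Termwise convergence of coefficients for polynomials of a fixed degree automatically upgrades to uniform convergence on compact subsets of $\C$, giving
\begin{equation*}
\lim_{n\to\infty}\frac{W_{d,n}(a_{n} z)}{\gamma_{2n}} \;=\; \sum_{j=0}^{d} \frac{d!\,(-1)^{j}}{(d-j)!\,(2j)!}\, z^{j}.
\end{equation*}
A direct coefficient comparison using \eqref{eq:Laguerre -1/2 def} together with the identity $(d-\tfrac{1}{2})_{d-k} = \frac{(2d)!\,k!}{4^{d-k}\,d!\,(2k)!}$ identifies this limit with $4^{d}\frac{(d!)^{2}}{(2d)!} L_{d}^{(-\frac{1}{2})}(z/4)$, completing \eqref{eq:even Jensen Lag limit}; the moreover clause is then exactly Proposition \ref{prop:cosine as finite free poisson}. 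I expect the telescoping step to be the most delicate part: propagating the one-step ratio asymptotic to a $j$-step one must be carried out using only the conclusions of Lemma \ref{lem:coefficients from roots}, uniformly in the starting index, without re-invoking the root density \eqref{eq:root density assump} directly.
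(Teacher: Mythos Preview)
Your argument is correct and complete. The explicit formula for $W_{d,n}$ you derive is right, the telescoping via \eqref{eq:ratio of coeff ratio condition} is valid for each fixed $i\le d-1$ (finitely many $o(1)$ perturbations multiply harmlessly), and the identification of the limiting polynomial with $4^{d}\frac{(d!)^2}{(2d)!}L_{d}^{(-1/2)}(z/4)$ via the Pochhammer identity checks out.

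Your route, however, is genuinely different from the paper's. The paper does \emph{not} compute $W_{d,n}$ in closed form; instead it verifies that the unshifted Jensen polynomials $J_{n+d,0}=W_{n+d,0}$ satisfy the moment hypotheses of the general Poisson limit theorem (Theorem~\ref{thm:Poisson limit theorem}), using cumulant formulas \eqref{eq:even Jensen cumulant first two}--\eqref{eq:even cumulant asympt} together with Lemma~\ref{lem:coefficients from roots}, and then invokes that theorem as a black box. The decomposition $M^{n}=\mathcal{F}_{n}\circ D^{n}$ and the law of large numbers (Theorem~\ref{thm:Law of large numbers}) are buried inside the proof of Theorem~\ref{thm:Poisson limit theorem}. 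Your approach is more elementary and self-contained: it bypasses all of the finite free machinery (cumulants, $\boxtimes_{n}$, Theorem~\ref{thm:multi conv cumulant formulas}) and needs only the single consequence \eqref{eq:ratio of coeff ratio condition} of Lemma~\ref{lem:coefficients from roots}. What the paper's approach buys is conceptual: it exhibits Laguerre Universality as a special case of a Poisson limit theorem valid for \emph{any} sequence of nonnegatively-rooted polynomials with the right first two moments, and that general theorem (with its $M_{\alpha,t}$ extension in Remark~\ref{rem:General laguerre}) is of independent interest. Your direct computation, by contrast, is specific to the Jensen-polynomial structure and would not obviously generalize, but it gives a shorter path to this particular theorem.
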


			\begin{remark}
				\Cr To our knowledge no version of Theorem \ref{thm:Laguerre uni} has been previously stated in the literature before our work. The motivation for Theorem \ref{thm:Laguerre uni} is to directly connect the universal limits of $f^{(n)}$ to universal limits of even Jensen polynomials.   \nc
			\end{remark}
			
			We additionally prove the Hermite Universality Conjecture for functions satisfying Assumption \ref{assump:root density}. This result follows from an application of Theorem \ref{thm:CLT}, stated below, which shows that Hermite polynomials naturally appear after repeated differentiation of high degree polynomials. 
			
			\begin{theorem}[Hermite Universality]\label{thm:Hermite uni}
				Let $f$ be an even entire function, as represented in \eqref{eq:even series representation}, that satisfies Assumption \ref{assump:root density} and let $d\in\N$. Let $J_{d,n}$ be the degree $d$ even Jensen polynomial with shift $n$ of $f$, as in \eqref{eq:Jensendef}. Define the centering and normalization sequences \begin{equation}\label{eq:Herm uni shift}
					b_n:=-2(2(n+d)-1)\frac{\gamma_{2(n+d-1)}}{\gamma_{2(n+d)}},
				\end{equation} and \begin{equation}
					c_n:=4(2(n+d)-1)^2\left[\left(\frac{\gamma_{2(n+d)-2}}{\gamma_{2(n+d)}}\right)^2-\frac{2(n+d)-3}{2(n+d)-1}\frac{\gamma_{2(n+d)-4 }}{\gamma_{2(n+d)}} \right].
				\end{equation} Then, \Cr Conjecture \ref{princ:Hermite uni} holds, specifically, \nc uniformly on compact subsets of the complex plane, \begin{equation}\label{eq:Herm uni}
					\lim_{n\rightarrow\infty} \frac{c_{n}^{-d/2}}{\gamma_{2(n+d)}}J_{d,n}\left( {\sqrt{c_n}z+b_n} \right)=\He_{d}(z).
				\end{equation}
			\end{theorem}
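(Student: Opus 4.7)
The plan is to deduce Theorem \ref{thm:Hermite uni} from the finite free central limit theorem (Theorem \ref{thm:CLT}). The starting point is the differentiation identity \eqref{eq:Jensen diff relation}:
\begin{equation*}
    J_{d,n}(z) = \frac{d!}{(n+d)!}\left(\frac{d}{dz}\right)^n J_{n+d,0}(z),
\end{equation*}
which expresses $J_{d,n}$ as a normalized $n$-fold derivative of the unshifted even Jensen polynomial $p_m := J_{n+d,0}$ of degree $m := n+d$. Since $f$ satisfies Assumption \ref{assump:root density}, the function $g(z) := f(\sqrt{z})$ lies in the Laguerre--P\'olya class on $[0,\infty)$, and hence $p_m$---being the classical Jensen polynomial of $g$---has only real (indeed non-positive) roots. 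The finite free CLT is tailored for precisely this setting: differentiating a real-rooted polynomial of growing degree down to a fixed degree $d$ produces, after centering by the empirical mean of the roots and rescaling by the natural standard-deviation scale, the Hermite polynomial $\He_d$.

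The next step is to identify $b_n$ and $\sqrt{c_n}$ with the first and second empirical moments of the roots of $p_m$. Writing $p_m(z) = \sum_{k=0}^m \binom{m}{k}\eta_k z^k$ with $\eta_k = \gamma_{2k}\,k!/(2k)!$ and computing
\begin{equation*}
    \frac{\eta_{m-1}}{\eta_m} = 2(2m-1)\frac{\gamma_{2(m-1)}}{\gamma_{2m}}, \qquad \frac{\eta_{m-2}}{\eta_m} = 4(2m-1)(2m-3)\frac{\gamma_{2(m-2)}}{\gamma_{2m}},
\end{equation*}
Vieta's formulas give, with $r_1, \ldots, r_m$ the roots of $p_m$ and $m = n+d$,
\begin{equation*}
    b_n = -\frac{\eta_{m-1}}{\eta_m} = \frac{1}{m}\sum_{i=1}^m r_i, \qquad c_n = \left(\frac{\eta_{m-1}}{\eta_m}\right)^2 - \frac{\eta_{m-2}}{\eta_m} = \frac{1}{m(m-1)}\sum_{i=1}^m (r_i - b_n)^2.
\end{equation*}
Thus $b_n$ is the empirical mean of the roots of $p_m$ and $\sqrt{c_n}$ is their empirical standard deviation divided by $\sqrt{m-1}$---precisely the CLT scaling.

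With the centering and scale identified, I would apply Theorem \ref{thm:CLT} to the sequence $\{p_{n+d}\}_{n\ge 0}$ of real-rooted degree $n+d$ polynomials, after translating by $b_n$ and rescaling by $\sqrt{c_n}$ so that the resulting roots have empirical mean zero and empirical variance comparable to $m-1$. The finite free CLT then asserts that the $n$-fold derivative of the rescaled polynomial, divided by $n!$, converges on compacta to $\He_d$. Tracking the constant $d!/(n+d)!$ from \eqref{eq:Jensen diff relation} and the identity $\eta_{n+d} = \gamma_{2(n+d)}(n+d)!/(2(n+d))!$ then assembles the claimed limit \eqref{eq:Herm uni}.

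The main technical hurdle is verifying the hypotheses of Theorem \ref{thm:CLT}. Beyond the matching of the first two moments, the finite free CLT typically demands a Lindeberg-type or bounded higher-cumulant condition on the empirical root measure---equivalently, control on the higher elementary symmetric functions of the roots, hence on coefficient ratios $\gamma_{2(m-k)}/\gamma_{2m}$ for $k \ge 3$. These inputs are to be extracted from Assumption \ref{assump:root density} via Lemma \ref{lem:coefficients from roots}: the asymptotic $-\gamma_{2n}/\gamma_{2(n-1)} \sim 4(\tfrac{\alpha\pi}{2}\csc(\tfrac{\alpha\pi}{2})\widetilde{h}(n))^{2/\alpha} n^{2-2/\alpha}$ together with the ratio limit $\gamma_{2(n-2)}\gamma_{2n}/\gamma_{2(n-1)}^2 \to 1$, extended to further coefficient ratios via the slow variation of $\widetilde{h}$, supply precisely the regularity needed to certify the CLT's hypotheses.
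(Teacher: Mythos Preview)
Your approach matches the paper's exactly: reduce to Theorem~\ref{thm:CLT} via the differentiation identity~\eqref{eq:Jensen diff relation}, identify $b_n$ and $c_n$ with the first two (finite free) cumulants of $J_{n+d,0}$, and extract the higher-moment hypothesis from Lemma~\ref{lem:coefficients from roots}. One small correction: the roots of $J_{n+d,0}$ are non\emph{negative}, not nonpositive (the paper uses this to replace $|m|_3$ by $m_3$).

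The only substantive omission is the actual verification of the $(2+\varepsilon)$-moment hypothesis of Theorem~\ref{thm:CLT}, which is in fact the entire technical content of the proof and is more delicate than your last paragraph suggests. The issue is that $c_n$ itself is $b_n^2$ times a quantity that tends to zero---namely $1-\tfrac{2m-3}{2m-1}\,\gamma_{2(m-2)}\gamma_{2m}/\gamma_{2(m-1)}^2$ with $m=n+d$---so one must show the third cumulant carries a \emph{stronger} cancellation. The paper sets $r(m):=\gamma_{2(m-2)}\gamma_{2m}/\gamma_{2(m-1)}^2-1\to 0$ (from~\eqref{eq:ratio of coeff ratio condition}) and checks directly from~\eqref{eq:even cumulant asympt} that the numerator of $\kappa_3^{n+d}(J_{n+d,0})/c_n^{3/2}$ is $O(r(m)^2)$ while the denominator is of order $|r(m)|^{3/2}$, giving the required $o(\sqrt{n+d})$. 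Simply invoking the slow variation of $\tilde h$ does not by itself isolate this second-order cancellation; you need to carry out this computation.
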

			
			\begin{remark}The centering and normalization sequences in Theorem \ref{thm:Hermite uni} have quite natural, though not obvious, probabilistic interpretations. If $X$ is a random variable uniformly distributed on the roots of $J_{n+d,0}$, then $b_{n}=\E X$ and $c_{n}\sim { \frac{\var(X)}{n+d} }$. Thus, they center and normalize $X$ to have mean $0$ and variance approximately {$n+d$}.  
		\end{remark}
			
			\begin{remark}
				\Cr The work of \cite{Griffin-Ono-Rolen-Zagier2019} has generated a substantial amount of interest in similar directions for Jensen polynomials, including \cite{Larson-Wagner2019,OSullivan2022Monat,OSullivan2021,Wagner2022,Griffin--Ono--Rolen--Thorner--Tripp--Wagner2022,Wagner2020,Iskander-Jain-Talvola2020}. {\cedittwo However, these previous results rely on having (either by assumption or a nice enough representation of the Fourier transform) a good expansions of 
\begin{equation} \label{eq:logp} \log\left(\frac{ p_{n+j} }{ p_n}  \right)  \end{equation}
as a polynomial in $j$ with $n$-dependent coefficients, where $p_n$ are the coefficients of the Jensen polynomials.} \Cr We instead prove that the assumption \eqref{eq:root density assump} implies an expansion for \eqref{eq:logp}, with $j=1,2$, then using finite free probability and convexity we show this suffices to prove our results, for a general class of even functions. 
			\end{remark}
			
			Farmer \cite[Section 5]{Farmer2022} points out some numerical evidence suggesting that for any fixed $d$, the rate of convergence to the $d$-th Hermite polynomial is $1/\sqrt{n}$. We shall see that Theorem \ref{thm:Hermite uni} is a special case of Theorem \ref{thm:CLT}, our central limit theorem (CLT) for repeated differentiation. To minimize the assumptions on $\gamma_{2m}$, we prove this CLT under the weakest possible assumptions on the growth rate of the roots. However, under stronger assumptions our proof can be adapted to get a Berry--Esseen type theorem which would verify Farmer's observation. The analogous rate of convergence for the finite free CLT under the finite free additive convolution appears in \cite{Arizmendi-Perales2020}. For ease of presentation, we omit the details.
			
			\subsection{Finite free limit theorems of differentiation}\label{sec:finite free limit theorems} In this section we present the limit theorems for generic sequences of real rooted monic polynomials \begin{equation}\label{eq:original sequence defintion}
				P_m(z)=\sum_{k=0}^{m}(-1)^{k}a_{k,m}z^{m-k}.
			\end{equation} 
			Theorems \ref{thm:Laguerre uni} and \ref{thm:Hermite uni} will follow by specializing these limit theorems to Jensen polynomials.
{\cedittwo Which in turn, we use to prove limit theorems for derivatives of entire functions in the Laguerre--P\'oyla class.
To the best of our knowledge, this work is the first instance of finite free probability being applied to study a function with an infinite number of roots. Given that functions in the Laguerre--P\'oyla class are well approximated by polynomials with real roots we believe finite free probability should serve as a powerful new tool in the study of such functions. Particularly, in the case of universal limits, where only a few root statistics (such as average spacing) govern the limiting behavior (such as limiting spacing of high derivatives).   }

			We define the empirical root measure of the polynomial $P_m$ to be the probability measure \begin{equation}
				\mu_{P_m}:=\frac{1}{m}\sum_{z:P_m(z)=0} \delta_{z},
			\end{equation} 
			where the roots are counted with multiplicities.  
			We denote the moments and absolute moments of $\mu_{P_m}$ by 
			\begin{equation}\label{eq:moment def}
				m_j(P_m):=\int_\R z^j\ d\mu_{P_m}(z)\quad\text{  and  }\quad|m|_{j}(P_m):=\int_\R |z|^j\ d\mu_{P_m}(z)
			\end{equation} 
			for $j \in \mathbb{N}$. 
			We present below a finite free law of large numbers, central limit theorem, and Poisson limit theorem in terms of the moments of $P_m$. The assumptions on moments of $\mu_{P_m}$ in these theorems can be translated into assumptions on the coefficients of $P_m$ by using Newton's identities. { In particular, our results only rely on the first few moments, which can be directly read off from the largest coefficients of the polynomial.} For positive integers $n$ and $j$, $(n)_{j}$ denotes the Pochhammer sequence $(n)_j = \prod_{i=1}^j (n-i+1)$.
			
			
			\begin{theorem}[Law of large numbers]\label{thm:Law of large numbers}
				Assume $m_{1}(P_{m})\rightarrow a\in\R$ and $m_2(P_{m})=o(m)$ as $m\rightarrow\infty$. Then, for any $d\in\N$, \begin{equation}\label{eq:LLN}
					\lim_{n\rightarrow\infty} \frac{1}{(n+d)_{n}}\left(\frac{d}{dz} \right)^{n}P_{d+n}(z)=(z-a)^d,
				\end{equation} uniformly on compact subsets of the complex plane.
			\end{theorem}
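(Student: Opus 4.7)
The plan is to prove the law of large numbers by showing that the roots of the monic degree $d$ polynomial
\begin{equation*}
	q_{d,n}(z) := \frac{1}{(n+d)_{n}}\left(\frac{d}{dz}\right)^{n} P_{d+n}(z)
\end{equation*}
all converge to $a$ as $n\to\infty$; since $q_{d,n}$ is monic of fixed degree $d$, convergence of its roots to $a$ immediately yields convergence of its coefficients to those of $(z-a)^{d}$, hence the desired uniform convergence on compact subsets of $\C$. By the Gauss--Lucas theorem, the roots $\rho_{1}^{(n)},\dots,\rho_{d}^{(n)}$ of $q_{d,n}$ are real. It therefore suffices to show that the mean and second moment of these $d$ roots converge to $a$ and $a^{2}$ respectively, since then
\begin{equation*}
	\sum_{j=1}^{d}(\rho_{j}^{(n)} - a)^{2} = d\bigl(m_{2}(q_{d,n}) - 2a\, m_{1}(q_{d,n}) + a^{2}\bigr) \longrightarrow 0,
\end{equation*}
which forces $\rho_{j}^{(n)}\to a$ for every $j$.

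The key computation is the following elementary identity: if $p$ is monic of degree $m$ with real roots, then $p'/m$ is monic of degree $m-1$, has the same root mean as $p$, and has root variance equal to $\frac{m-2}{m-1}$ times the root variance of $p$. This follows directly from the expansions $p(z) = z^{m} - e_{1} z^{m-1} + e_{2} z^{m-2} - \cdots$ and $p'(z)/m = z^{m-1} - \frac{m-1}{m} e_{1} z^{m-2} + \frac{m-2}{m} e_{2} z^{m-3} - \cdots$ and a short calculation of the first two power sums. Iterating this $n$ times starting from $p = P_{d+n}$ and telescoping the product $\prod_{j=0}^{n-1}\frac{m-2-j}{m-1-j} = \frac{d-1}{n+d-1}$ (with $m = n+d$) yields
\begin{equation*}
	m_{1}(q_{d,n}) = m_{1}(P_{d+n}), \qquad m_{2}(q_{d,n}) - m_{1}(q_{d,n})^{2} = \frac{d-1}{n+d-1}\bigl(m_{2}(P_{d+n}) - m_{1}(P_{d+n})^{2}\bigr).
\end{equation*}

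It remains only to apply the hypotheses. The assumption $m_{1}(P_{d+n})\to a$ immediately gives $m_{1}(q_{d,n})\to a$. Moreover, since $m_{1}(P_{d+n})$ is bounded and $m_{2}(P_{d+n}) = o(n+d)$, we have $m_{2}(P_{d+n}) - m_{1}(P_{d+n})^{2} = o(n+d)$, so the variance of the roots of $q_{d,n}$ equals $\frac{d-1}{n+d-1}\cdot o(n+d) = o(1)$ as $n\to\infty$. Combined with $m_{1}(q_{d,n})\to a$, this gives $m_{2}(q_{d,n})\to a^{2}$, completing the proof.

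The argument is entirely elementary; there is no serious conceptual obstacle, only bookkeeping in the variance-reduction iteration. The finite free probability framework introduced in Section \ref{sec:Finite free probability} provides a conceptual interpretation of these identities, since $p'/m$ is, up to normalization, the finite free projection of $p$ to one lower dimension; nevertheless, the direct computation above appears to be the most efficient route to the law of large numbers under the optimal moment hypothesis $m_{2}(P_{m}) = o(m)$.
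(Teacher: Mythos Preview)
Your proof is correct, and it takes a genuinely different route from the paper's.

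The paper identifies the normalized derivative with a finite free multiplicative convolution $P_{d+n}\boxtimes_{n+d} z^{n}(z-1)^{d}$ and then invokes the partition-sum formula of Theorem~\ref{thm:multi conv cumulant formulas} to compute \emph{all} moments $m_{j}(P_{d,n})$ for $j\leq d$, showing each converges to $a^{j}$; convergence of all moments then forces convergence of the coefficients.  Your argument instead tracks only the first two moments through repeated differentiation via the elementary identity that a single derivative preserves the root mean and multiplies the root variance by $\tfrac{m-2}{m-1}$, telescopes the resulting product, and then exploits real-rootedness (via Gauss--Lucas) to conclude that mean $\to a$ and variance $\to 0$ already pin down every root.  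This is strictly more economical for the law of large numbers and makes the dependence on the hypothesis $m_{2}(P_{m})=o(m)$ completely transparent.  What the paper's approach buys is uniformity: the same convolution machinery and the same combinatorial expansion are reused verbatim for the central limit theorem (Theorem~\ref{thm:CLT}) and the Poisson limit theorem, whereas your variance-reduction trick does not extend to those settings since one must then identify nontrivial higher moments, not merely show they vanish.

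A minor notational point: in the paper $q_{d,n}$ denotes the projector polynomial $z^{n}(z-1)^{d}$, while you use the same symbol for the differentiated polynomial itself (the paper's $P_{d,n}$); you may want to rename to avoid a clash if this is merged.
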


			We define the dilation function on polynomials $\mathfrak{D}_k$  for $k>0$ by \begin{equation}\label{eq:dilation def}
				\DD_kp(z):=k^{\deg(p)}p(z/k).
			\end{equation} The roots of $\DD_kp$ are the roots of $p$ multiplied by $k$, with $\DD_kp$ and $p$ having identical leading-degree coefficients. 
			
			\begin{theorem}[Central limit theorem]\label{thm:CLT}
				Fix $d\in\N$. Assume $m_1(P_m)=o(m^{-1/2})$, $m_2(P_m)\rightarrow 1$ and $|m|_{2+\eps}(P_m)=o(m^{\eps/2})$ as $m\rightarrow\infty$ for some $\eps>0$. Then, \begin{equation}
					\lim_{n\rightarrow\infty}\frac{1}{(n+d)_{n}}\left(\frac{d}{dz} \right)^{n}\DD_{\sqrt{n+d}}P_{d+n}(z)=\He_d(z),
				\end{equation} uniformly on compact subsets of the complex plane, where $\He_{d}$ is the $d$-th Hermite polynomial \eqref{eq:Hermite def}.
			\end{theorem}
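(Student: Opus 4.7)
The plan is to reduce the theorem to a coefficient-wise convergence statement for a fixed degree $d$ polynomial, and then show each coefficient converges using Newton's identities together with the moment hypotheses. Writing $P_m(z)=\sum_{j=0}^{m}(-1)^{j}a_{j,m}z^{m-j}$ as in \eqref{eq:original sequence defintion}, I will compute directly that the coefficient of $z^{d-j}$ in $\frac{1}{(n+d)_n}(\frac{d}{dz})^n \DD_{\sqrt{n+d}} P_{d+n}(z)$ is equal to
\[
(-1)^{j}\,\frac{a_{j,n+d}}{(n+d)^{j/2}}\cdot\frac{d!}{(d-j)!}\cdot\prod_{i=0}^{j-1}\frac{n+d}{n+d-i}.
\]
Since $j\leq d$ is fixed, the last product tends to $1$ as $n\to\infty$. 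Setting $\tilde a_{j,m}:=a_{j,m}/m^{j/2}$, the theorem reduces to showing $\tilde a_{j,m}\to c_j$ where $c_j=(-1)^{k}/(2^{k}k!)$ if $j=2k$ and $c_j=0$ if $j$ is odd, since these are exactly the values for which $(-1)^{j}c_j\,d!/(d-j)!$ reproduces the coefficients of $\He_d$ in \eqref{eq:Hermite def}.

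By Newton's identities, if $p_{k}(P_m)=\sum_i r_i^k$ denotes the power sums of the roots and $\tilde p_{k,m}:=p_k(P_m)/m^{k/2}$, then
\[
j\,\tilde a_{j,m}=\sum_{k=1}^{j}(-1)^{k-1}\tilde a_{j-k,m}\,\tilde p_{k,m}.
\]
A straightforward induction on $j$ shows that to obtain the limits $c_j$ it suffices to establish $\tilde p_{1,m}\to 0$, $\tilde p_{2,m}\to 1$, and $\tilde p_{k,m}\to 0$ for every integer $k\geq 3$, for then the $c_j$ satisfy $jc_j=-c_{j-2}$ with $c_0=1$ and $c_1=0$, solving to the desired values. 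The first two limits are immediate from the hypotheses: $\tilde p_{1,m}=\sqrt{m}\,m_1(P_m)=o(1)$ and $\tilde p_{2,m}=m_2(P_m)\to 1$.

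The main technical obstacle is bounding $\tilde p_{k,m}$ for $k\geq 3$, and here the Lyapunov-type assumption $|m|_{2+\eps}(P_m)=o(m^{\eps/2})$ must be used in an essential way. My plan is to first observe that this hypothesis forces the roots to lie in $[-\sqrt{m},\sqrt{m}]$ for all sufficiently large $m$: Chebyshev gives
\[
\#\{i:|r_i|>\sqrt{m}\}\leq m^{-1-\eps/2}\sum_i |r_i|^{2+\eps}=m^{-\eps/2}|m|_{2+\eps}(P_m)=o(1),
\]
and an integer sequence that is $o(1)$ eventually vanishes. Once this uniform root bound is in hand, for any integer $k\geq 3$ one can absorb powers into the controlled $(2+\eps)$-moment:
\[
|p_k(P_m)|\leq \sum_i |r_i|^{k-2-\eps}\cdot|r_i|^{2+\eps}\leq m^{(k-2-\eps)/2}\cdot m\,|m|_{2+\eps}(P_m)=m^{k/2}\cdot o(1),
\]
which yields $\tilde p_{k,m}\to 0$ as needed.

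Combining the three paragraphs, Newton's identities propagate these power-sum limits to give $\tilde a_{j,m}\to c_j$ for each $j\leq d$, the coefficient of $z^{d-j}$ in the rescaled derivative converges to the corresponding Hermite coefficient, and since convergence of coefficients of a sequence of degree $d$ polynomials to those of a fixed polynomial is equivalent to uniform convergence on compacta, the theorem follows.
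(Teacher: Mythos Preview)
Your proof is correct and takes a genuinely different, more elementary route than the paper's. One small caveat: the inequality $|r_i|^{k-2-\eps}\le m^{(k-2-\eps)/2}$ requires $k-2-\eps\ge 0$, which fails for $k=3$ when $\eps>1$. This is easily repaired by noting that one may assume $\eps\le 1$ without loss of generality: Lyapunov interpolation between $|m|_2=m_2\to 1$ and $|m|_{2+\eps}=o(m^{\eps/2})$ gives $|m|_{2+\eps'}\le |m|_2^{1-\eps'/\eps}\,|m|_{2+\eps}^{\eps'/\eps}=o(m^{\eps'/2})$ for any $0<\eps'\le 1$.

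The paper instead works entirely inside the finite free probability framework. It realizes the differentiated polynomial as the multiplicative convolution $P_{n+d}\boxtimes_{n+d} z^n(z-1)^d$, expands the moments of this convolution via the formula of Arizmendi--Garza-Vargas--Perales (Theorem~\ref{thm:multi conv cumulant formulas}) as a sum over pairs of partitions $(\sigma,\tau)$ with $\sigma\vee\tau=1_j$, shows under the moment hypotheses that the dominant contribution comes from pair partitions $\sigma\in\mathcal P_2(j)$, and finally matches the surviving sum to the moments of $\He_d$ via a combinatorial identity (Lemma~\ref{lem:comb identity 1}). Your approach sidesteps all of this: you compute the coefficients of the differentiated, dilated polynomial directly, reduce to $a_{j,m}/m^{j/2}\to c_j$, and extract this from Newton's identities once the rescaled power sums $\tilde p_{k,m}$ are controlled. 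The integer-valued Markov argument forcing $\max_i|r_i|\le\sqrt m$ eventually is a neat shortcut (the paper effectively uses the cruder $\max_i|r_i|\le (m\,|m|_{2+\eps})^{1/(2+\eps)}$ inside its cumulant bounds). Your proof is shorter and self-contained; the paper's is heavier but keeps the argument parallel to its law of large numbers and Poisson limit theorem, and makes structurally transparent why $\He_d$ appears, as the unique monic degree~$d$ polynomial whose finite free cumulants beyond the second vanish.
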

			
			\begin{remark}
				 After a preprint of this article first appeared, it came to our attention that Theorem \ref{thm:CLT} with $\eps=1$ is equivalent to Theorem 2.9 in \cite{Gorin-Kleptsyn2024}, which is proven using completely different methods. 
							\end{remark}
							
				{\cedittwo
				Both Theorem \ref{thm:Law of large numbers} and Theorem \ref{thm:CLT} are optimal in terms of moments.  It is straightforward to show $P_{m}(z)=z^{m}-\frac{m^2}{4}z^{m-2}$ does not satisfy the higher order moment bounds of Theorem \ref{thm:Law of large numbers}, and the derivatives do not converge to $z^d$. A similar example which does not satisfy the hypothesis nor conclusion of Theorem \ref{thm:CLT} is $P_{m}(z)=z^{m}-\frac{m}{2}z^{m-2}$. It is worth noting that in neither Theorem \ref{thm:Law of large numbers} nor Theorem \ref{thm:CLT} do we assume that the empirical measures have any limit, merely that the first few moments do not grow too quickly in the degree. 
				
			Theorem \ref{thm:CLT} is exactly a deterministic version of the result of Hoskins and Steinerberger \cite{Hoskins-Steinerberger2022} for polynomials $p_n(z) = \prod_{i=1}^n (z-X_i)$ with the roots, $X_i$, being independent and identically distributed (iid) random variables. In fact, our result replaces their assumption that all moments of $X_i$ are finite with the assumption that $ \E[|X_i|^{2+ \eps}]$ is finite. Indeed, under this assumption, by the Law of Large Numbers, almost surely, the $(2+\eps)^{th}$ moment of the empirical root measure, $|m|_{2+\eps}$, is bounded, so Theorem \ref{thm:CLT} can be applied to recover their result.
			
				The results in both \cite{Griffin-Ono-Rolen-Zagier2019} and the papers \cite{Hoskins-Steinerberger2022, Gorin-Kleptsyn2024} concern polynomials converging to Hermite polynomials under repeated differentiation. However, these two groups of papers share no citations on MathSciNet and neither cites the other. We did not find indications the two groups or authors of subsequent works were aware the repeated differentiation results were being applied in such similar ways in two different fields. All three results are versions of Theorem \ref{thm:CLT} (though \cite{Griffin-Ono-Rolen-Zagier2019} make no assumptions on the real-rootedness of the function). The approach in \cite{Hoskins-Steinerberger2022} uses the independence of the roots and Newton's identities to apply the classical law of large numbers and central limit theorem to recover the Hermite polynomials in the limit of repeated differentiation under stronger moment assumptions, while \cite{Gorin-Kleptsyn2024} uses a contour integral approach.  Theorem \ref{thm:CLT} instead follows the intuition from \eqref{eq:electro interp} that after a large number of derivatives the {remaining polynomial only depends on few simple empirical statistics of the roots.}
				
				}

			We give the following corollary for characteristic polynomials of random matrices, which is interesting in its own right. 
			
			\begin{corollary}\label{cor:limits of Wigner characteristic functions}
				Let $\{W_n\}_{n=1}^\infty$ be a sequence of $n\times n$ Wigner matrices, i.e., $W_n$ is an $n \times n$ Hermitian matrix with mean $0$ and variance $1$ entries that are independent up to the symmetry condition. We additionally assume the entries have finite moments of all orders\footnote{One could instead assume finite fourth moment, but we choose this assumption for simplicity.}. If $d\in\N$ and $\Phi_n(z)=\det(z-W_n)$ is the characteristic polynomial of $W_n$, then almost surely  \begin{equation}
					\lim\limits_{n\rightarrow\infty}\frac{1}{(n+d)_n}\left(\frac{d}{dz} \right)^{n}\Phi_{n+d}(z)=\He_{d}(z)
				\end{equation} uniformly on compact subsets of the complex plane.
			\end{corollary}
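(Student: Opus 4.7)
The plan is to reduce to Theorem~\ref{thm:CLT} by applying it to the rescaled characteristic polynomials $P_m(z) := \DD_{1/\sqrt{m}}\Phi_m(z)$, whose roots are the normalized eigenvalues $\lambda_i(W_m)/\sqrt{m}$. A direct computation shows $\DD_{\sqrt{n+d}}\DD_{1/\sqrt{n+d}}$ is the identity on polynomials of degree $n+d$, so $\DD_{\sqrt{n+d}}P_{n+d} = \Phi_{n+d}$. Thus Theorem~\ref{thm:CLT} will give the corollary once we verify, almost surely, the three moment conditions $m_1(P_m) = o(m^{-1/2})$, $m_2(P_m) \to 1$, and $|m|_{2+\eps}(P_m) = o(m^{\eps/2})$ for the sequence $\{P_m\}$.

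These three conditions translate into almost sure statements about $W_m$. The first becomes $\tr(W_m) = o(m)$, which is the classical strong law of large numbers applied to the i.i.d.\ diagonal entries of $W_m$ (each of mean zero). The second becomes $\frac{1}{m^2}\tr(W_m^2) = \frac{1}{m^2}\sum_{i,j}|W_{ij}|^2 \to 1$, which follows from the strong law applied to the $m^2$ squared entries, whose common mean is one and whose fourth moments are uniformly bounded; the $O(m)$ diagonal contribution is negligible. The third becomes $\sum_i |\lambda_i(W_m)|^{2+\eps} = o(m^{2+\eps})$, which follows from the crude bound $\sum_i |\lambda_i(W_m)|^{2+\eps} \leq m\|W_m\|^{2+\eps}$ together with the standard Bai--Yin-type estimate $\|W_m\| = O(\sqrt{m})$ almost surely, valid under the finite moment assumption (in fact finite fourth moment suffices, consistent with the footnote).

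Intersecting the three full probability events on which these statements hold and invoking Theorem~\ref{thm:CLT} then yields the desired almost sure uniform convergence on compact subsets. The only nontrivial external input is the operator norm bound for Wigner matrices; the remaining conditions reduce to direct applications of the classical strong law. Conceptually the argument is just the observation that a Wigner matrix's rescaled empirical spectral distribution satisfies, almost surely, the exact moment hypotheses of the deterministic finite free CLT, so the CLT can be applied pathwise.
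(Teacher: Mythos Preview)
Your proposal is correct and follows essentially the same route as the paper: both verify that the (rescaled) characteristic polynomials $\Phi_{n+d}$ satisfy the moment hypotheses of Theorem~\ref{thm:CLT} almost surely, and then invoke that theorem directly. The paper is terser---it cites standard references for the asymptotics $\kappa_1^{n+d}(\Phi_{n+d})=o(1)$, $\kappa_2^{n+d}(\Phi_{n+d})/n\sim 1$, $m_4(\Phi_{n+d})=O(n^2)$---while you spell out the strong law and Bai--Yin norm bound explicitly, but the underlying argument is the same.
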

			
			Finally, we present a Poisson limit theorem for polynomials, \Cr which we prove under optimal moment conditions, \nc which we will use to prove the Laguerre Universality Conjecture for even Jensen polynomials of functions with real roots. 
			
			\begin{theorem}[Poisson limit theorem]\label{thm:Poisson limit theorem}
				Assume the polynomials $P_{m}$ are monic, have only non-negative roots, $\frac{m_1(P_m)}{m}\sim a\in (0,\infty)$ and $m_2(P_m)=o(m^3)$ as $m \to \infty$. Then, \begin{equation}
					\lim\limits_{n\rightarrow\infty} \frac{1}{(2n+2d)_{2n}}M^n \DD_{a^{-1}} P_{n+d}(z)=d!(-1)^{d}L_{d}^{\left(-\frac{1}{2}\right)}(z),
				\end{equation} uniformly on compact subsets of the complex plane, where $M$ is defined in \eqref{eq:M definition} and $L_{d}^{\left(-\frac{1}{2}\right)}$ is defined in  \eqref{eq:Laguerre -1/2 def}.
			\end{theorem}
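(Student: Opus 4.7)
The plan is to carry out a direct coefficient computation, exploiting the very explicit action of $M$ on monomials. A short calculation shows $Mz^m = 2m(2m-1)z^{m-1}$, so by induction
\[
M^n z^m = \frac{(2m)!}{(2(m-n))!}\,z^{m-n}\quad(m\geq n), \qquad M^n z^m = 0 \quad (m<n).
\]
Hence $M^n$ sends a polynomial of degree $n+d$ to one of degree $d$. I would therefore expand
\[
\DD_{a^{-1}}P_{n+d}(z) = \sum_{k=0}^{n+d}(-1)^k\frac{a_{k,n+d}}{a^k}\,z^{n+d-k},
\]
apply $M^n$ (which kills all terms with $k>d$), divide by $(2n+2d)_{2n}=\frac{(2n+2d)!}{(2d)!}$, and show coefficient-by-coefficient convergence to $d!(-1)^dL_d^{(-1/2)}(z)$. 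Since everything in sight is a polynomial of fixed degree $d$, coefficient-wise convergence automatically yields uniform convergence on compact subsets.

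The coefficient of $z^{d-k}$ on the left-hand side is
\[
(-1)^k\,\frac{a_{k,n+d}}{a^k}\cdot\frac{(2(n+d-k))!\,(2d)!}{(2(d-k))!\,(2n+2d)!}.
\]
The factorial ratio satisfies $\frac{(2(n+d-k))!}{(2n+2d)!}\sim \frac{1}{(2n+2d)^{2k}}=\frac{1}{4^k(n+d)^{2k}}$. A short manipulation of Pochhammer and double-factorial identities shows $(d-1/2)_k=\frac{(2d)!(d-k)!}{4^k d!(2(d-k))!}$, which makes the coefficient of $z^{d-k}$ in $d!(-1)^dL_d^{(-1/2)}(z)$ equal to $\frac{(-1)^k(2d)!}{4^k k!(2(d-k))!}$. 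Matching both sides, the entire theorem reduces to the single asymptotic claim
\[
a_{k,n+d}\sim \frac{(a(n+d)^2)^k}{k!} \qquad\text{for each fixed }k\in\{0,1,\dots,d\}.
\]

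The crux is establishing this asymptotic for the elementary symmetric polynomials $a_{k,n+d}=e_k(\lambda_1,\dots,\lambda_{n+d})$ under the stated moment hypotheses. I would use Newton's identities
\[
k\,e_k = \sum_{i=1}^k(-1)^{i-1}p_i\,e_{k-i},\qquad p_i=\sum_j \lambda_j^i=(n+d)\,m_i(P_{n+d}),
\]
together with the hypotheses: $m_1(P_{n+d})/(n+d)\to a$ gives $p_1\sim a(n+d)^2$, while $m_2(P_{n+d})=o((n+d)^3)$ gives $p_2=o((n+d)^4)=o(p_1^2)$. Here the non-negativity of the roots is essential: for $j\geq 2$,
\[
p_j=\sum_i\lambda_i^j\leq \bigl(\max_i\lambda_i\bigr)^{j-2}\sum_i\lambda_i^2\leq p_1^{j-2}p_2,
\]
so $p_j=o(p_1^j)$ for every fixed $j\geq 2$. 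An induction on $k$ via Newton's identity then yields $e_k\sim p_1^k/k!$, since every non-leading contribution contains some $p_i$ with $i\geq 2$ and is thus of strictly smaller order than $p_1^k$.

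The main obstacle is this last step: controlling all power sums $p_j$ from information about only $p_1$ and $p_2$. Non-negativity of the roots is the only assumption that lets one do this cleanly, and it is the sole place the sign hypothesis enters in any essential way. With the elementary symmetric asymptotic in hand, the remainder is a routine substitution matching the two sides coefficient by coefficient.
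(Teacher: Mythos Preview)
Your proof is correct and complete. The key reduction to $a_{k,n+d}\sim (a(n+d)^2)^k/k!$ is right, and your argument for it via Newton's identities and the crude bound $p_j\le p_1^{j-2}p_2$ (valid by non-negativity of the roots) is clean. The coefficient matching with $d!(-1)^dL_d^{(-1/2)}$ also checks out.

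Your route is genuinely different from the paper's. The paper factorises $M^n$ as $\mathcal{F}_n\circ D^n$ where $\mathcal{F}_n=\prod_{k=0}^{n-1}(2k+1+2zD)$ is a polynomial in $zD$; by Lemma~\ref{lem:diff and boxtimes} this writes $\frac{1}{(2n+2d)_{2n}}M^nP_{n+d}$ as the finite free multiplicative convolution of $\frac{1}{(n+d)_n}D^nP_{n+d}$ with $\frac{(n+d)_n}{(2n+2d)_{2n}}\mathcal{F}_n(z-1)^d$. The first factor is handled by the law of large numbers, Theorem~\ref{thm:Law of large numbers}, and the second is computed explicitly and shown to converge (after a dilation) to $d!(-1)^dL_d^{(-1/2)}$. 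Your approach bypasses the convolution machinery entirely: since $M$ acts diagonally on monomials you compute $M^n$ on each coefficient directly, and then the whole theorem reduces to the elementary-symmetric-function asymptotic, which you prove from scratch rather than quoting Theorem~\ref{thm:Law of large numbers}. Your argument is more self-contained and arguably shorter; the paper's argument is more in keeping with its finite-free-probability narrative and makes the structural reason for the Laguerre limit (it is literally $\lim_n\mathcal{F}_n(z-1)^d$ after rescaling) more transparent. Both extend without difficulty to the operators $M_{\alpha,t}$ of Remark~\ref{rem:General laguerre}.
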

			
			\begin{remark}\label{rem:General laguerre}
				By a line-by-line modification of the proof, a generalization of Theorem \ref{thm:Poisson limit theorem} holds for any operator $M_{\alpha,t}=t\left((1+\alpha)D+zD^{2} \right)$ where $t>0$ and $\alpha>-1$ and the limit replaced by $L^{\left(\alpha\right)}_{d}$, the generalized Laguerre polynomial of parameter $\alpha$.  However, for notational simplicity, we will just prove Theorem \ref{thm:Poisson limit theorem} and leave the details to the reader.
				
				\Cr As with Theorem \ref{thm:Laguerre uni} we are not aware of any previous version of Theorem \ref{thm:Poisson limit theorem} appearing in the literature before, even under weaker moment assumptions. \nc
			\end{remark}
			
			The following is the Poisson analogue of Corollary \ref{cor:limits of Wigner characteristic functions} for positive definite random matrices. We omit the proof for brevity, as it follows directly from Theorem \ref{thm:Poisson limit theorem} and standard random matrix results.
			
			\begin{corollary}\label{cor:limits of Wishart char polynomials}
				Let $\{X_n\}_{n=1}^\infty$ be a sequence of $n\times n$ random matrices with iid entries that are mean $0$, variance $1$, and have finite moments of all orders. If $d\in\N$ and $\Psi_n(z)=\det(z-X_{n}^*X_{n})$ is the characteristic polynomial of $X_{n}^*X_{n}$, where $X_n^*$ is the conjugate transpose of $X_n$, then almost surely  \begin{equation}
					\lim\limits_{n\rightarrow\infty}\frac{1}{(2n+2d)_{2n}}M^n\Psi_{n+d}(z)=d!(-1)^{d}L_{d}^{\left(-\frac{1}{2}\right)}(z),
				\end{equation} uniformly on compact subsets of the complex plane, where $M$ is defined in \eqref{eq:M definition} and $L_{d}^{\left(-\frac{1}{2}\right)}$ is defined in  \eqref{eq:Laguerre -1/2 def}.
			\end{corollary}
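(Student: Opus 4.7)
The plan is to verify the hypotheses of Theorem \ref{thm:Poisson limit theorem} for the random sequence $P_m = \Psi_m$ and then apply that theorem pathwise on a probability-one event. Since $X_m^*X_m$ is positive semidefinite, $\Psi_m$ is automatically monic with only non-negative roots, so that part of the hypothesis is satisfied deterministically. What remains is to check, almost surely, the moment conditions $m_1(\Psi_m)/m \to a$ for some $a \in (0,\infty)$ and $m_2(\Psi_m) = o(m^3)$.

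First I would rewrite the first moment as
\begin{equation*}
    \frac{m_1(\Psi_m)}{m} = \frac{1}{m^2}\tr(X_m^*X_m) = \frac{1}{m^2}\sum_{i,j=1}^{m} |X_{ij}|^2,
\end{equation*}
which is an average of $m^2$ iid random variables of mean one. By the strong law of large numbers this converges almost surely to $1$, so in the notation of Theorem \ref{thm:Poisson limit theorem} we have $a = 1$. This is the reason no dilation appears in the conclusion: with $a = 1$ the operator $\DD_{a^{-1}}$ is the identity, so that $\DD_{a^{-1}} P_{n+d} = \Psi_{n+d}$, matching the statement of the corollary exactly.

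For the second moment, standard random matrix results (for instance the Bai--Yin theorem, which needs only a finite fourth moment, well inside our hypotheses) give a random constant $C$ such that almost surely $\|X_m^* X_m\|_{\mathrm{op}} \leq C m$ for all sufficiently large $m$. Letting $\lambda_1(m) \geq \cdots \geq \lambda_m(m) \geq 0$ denote the eigenvalues of $X_m^* X_m$, this yields $\lambda_i(m) \leq Cm$ for every $i$, whence
\begin{equation*}
    m_2(\Psi_m) = \frac{1}{m}\sum_{i=1}^{m} \lambda_i(m)^2 \leq Cm \cdot m_1(\Psi_m) = O(m^2)
\end{equation*}
almost surely, which is certainly $o(m^3)$. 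On the intersection of the two almost-sure events, the deterministic hypotheses of Theorem \ref{thm:Poisson limit theorem} hold for $\{\Psi_m\}_{m=1}^{\infty}$ with $a=1$, and invoking that theorem pathwise produces the claimed limit. There is no genuine analytic obstacle here; the main thing to get right is the bookkeeping that translates the probabilistic inputs into the deterministic conditions of Theorem \ref{thm:Poisson limit theorem}, and in particular the observation that $a=1$ is what removes the dilation from the statement.
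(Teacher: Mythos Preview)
Your proposal is correct and follows exactly the approach the paper indicates: verify the hypotheses of Theorem \ref{thm:Poisson limit theorem} for $\Psi_m$ using standard random matrix facts (strong law for $m_1(\Psi_m)/m\to 1$ and an operator-norm bound for $m_2(\Psi_m)=O(m^2)$), then apply the theorem pathwise with $a=1$ so that $\DD_{a^{-1}}$ is the identity. The paper omits the details entirely, so your write-up is in fact more explicit than what the paper provides.
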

			
			\subsection{A note on complex roots} We formulated all of our results in terms of real rooted polynomials. However, given the results on the Riemann $\Xi$-function \cite{Ki2006,Griffin-Ono-Rolen-Zagier2019} one may inquire if similar results hold for functions with a small number of complex roots or only under the assumption that the roots are restricted to some strip. This restriction to a strip is motivated by a result of Kim \cite{Kim1996} that if $f$ is an entire function of order less than $2$ which is real on the real line with all of its roots in a strip containing the real line, then for any $R>0$ the roots of $f^{(n)}$ in $|z|<R$ are all real for $n$ sufficiently large.
			
			We chose to focus instead on real rooted functions where the theory of finite free probability is much more developed. However, much of what we do does not require real roots. The two main applications of real-rootedness in our arguments are in the proof of Lemma \ref{lem:coefficients from roots}, \Cr below, \nc and in bounding the higher moments for the empirical root measures of our polynomials. If one assumed conditions on the coefficients of $f$ (similar to \cite{Griffin-Ono-Rolen-Zagier2019}) and bounds on all the absolute moments of the polynomials in Section \ref{sec:finite free limit theorems}, then one could adapt our techniques to allow for some complex roots.  We leave this direction for future research. 

			\section{Background on finite free probability theory}\label{sec:Finite free probability}
			
			
			
			The proofs of our main results are based on finite free probability theory.  In this section, we provide a basic introduction to the concepts and results in finite free probability theory that we will need.  While many of the concepts in finite free probability are based on those from free probability theory, the reader does not need any background in free probability theory to understand the proofs.  We refer the interested reader to \cite{Nica-Speicher2006, Mingo-Speicher2017} for more details on free probability.

			In \cite{Marcus-Spielman-Srivastava2022}, the related field of finite free probability was introduced by defining a convolution on { monic} polynomials that gives the expected characteristic polynomial of a random matrix. More precisely, if $A$ and $B$ are $n$-dimensional Hermitian matrices with characteristic polynomials $p$ and $q$, respectively, then the finite free additive convolution of $p$ and $q$ is given by:
			\[ p(z) \boxplus_n q(z) := \E_Q[\chi_{A+QBQ^T}(z) ], \]
			where the expectation is taken with respect to $Q$, a Haar-distributed orthonormal matrix, and $\chi_{A+QBQ^T}$  is the characteristic polynomial of $A+QBQ^T$. In fact, if 
			\begin{equation} \label{eq:polypq}
				p(z) = \sum_{i=0}^n z^{n-i} (-1)^i a_i^p \quad \text{ and } \quad q(z) = \sum_{i=0}^n z^{n-i} (-1)^i a_i^q, 
			\end{equation}
			are { monic polynomials} then $ p(z) \boxplus_n q(z) $ can be computed explicitly as
			\[ p(z) \boxplus_n q(z) = \sum_{k=0}^n z^{n-k} (-1)^k \sum_{i+j=k} \frac{ (n-i)! (n-j)!   }{n! (n- i -j)!   } a_i^p a_j^q .\]
			It was observed in  \cite{Marcus-Spielman-Srivastava2022} that $\boxplus_n$ can also computed in terms of differential operators; namely if $\widehat p$ and $\widehat q$ are such that $\widehat p(D) z^n= p(z) $ and $\widehat q(D)z^n = q(z) $, where $D$ denotes the differentiation operator, then 
			\begin{equation} \label{eq:sumderiv}
				p(z) \boxplus_n q(z) =  \widehat p(D)  \widehat q(D)  z^n. 
			\end{equation}
			Remarkably, this convolution was originally introduced by Walsh \cite{MR1501220} in 1922 in a different context, and it enjoys many nice properties; see the discussions in \cite{MR225972,MR1954841} for further details and historical notes.

			In \cite{Marcus-Spielman-Srivastava2022}, the finite free multiplicative convolution is also defined for polynomials $p$ and $q$ given in \eqref{eq:polypq} as 
			\[ p(z) \boxtimes_n q(z) := \sum_{k=0}^n z^{n-k} (-1)^{k} \frac{a_k^p a_k^q}{ \binom{n}{k} }. \] 
			This convolution can also be shown to be the expected characteristic polynomial of a random matrix \cite{Marcus-Spielman-Srivastava2022}. The finite free multiplicative convolution was also classically studied in \cite{MR1544526} in a different context.
			
			The identity polynomial for the finite free multiplicative convolution is $q(z) = (z- 1)^n$, which is the characteristic polynomial of the identity matrix. { It was observed in \cite[Section 5.3.4]{Mirabelli2021}, that similar to \eqref{eq:sumderiv}, many differential operators can be implemented by finite free multiplicative convolution:

			}
			
			
			
			\begin{lemma}[Lemma 3.24 from \cite{Mirabelli2021}, see also \cite{Marcus-Spielman-Srivastava2022}]\label{lem:diff and boxtimes}  If $P$ and $Q$ are polynomials such that $p(z) = P(z D) (z-1)^d$ and $q(z) = Q(z D) (z-1)^d$, then
				\[ p(z) \boxtimes_d q(z) = P(z D) Q(z D) (z-1)^d = P(z D) q(z) = Q(z D) p(z). \]
			\end{lemma}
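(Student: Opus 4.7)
The plan is to exploit the fact that the Euler operator $zD$ is diagonal in the monomial basis: since $(zD) z^j = j z^j$, for any polynomial $R$ we have $R(zD) z^j = R(j) z^j$. Consequently any two polynomials in the single operator $zD$ commute, and $R(zD) S(zD) z^j = R(j) S(j) z^j = (RS)(zD) z^j$. This single observation essentially reduces the lemma to comparing coefficients.

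First I would expand the identity element in the monomial basis:
\[
(z-1)^d = \sum_{i=0}^d (-1)^i \binom{d}{i} z^{d-i}.
\]
Applying $P(zD)$ term by term gives
\[
p(z) = P(zD)(z-1)^d = \sum_{i=0}^d (-1)^i \binom{d}{i} P(d-i)\, z^{d-i},
\]
so in the convention $p(z) = \sum_{i=0}^d z^{d-i}(-1)^i a_i^p$ used to define $\boxtimes_d$, the coefficients are $a_i^p = \binom{d}{i} P(d-i)$. By the same argument, $a_i^q = \binom{d}{i} Q(d-i)$.

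Next I would plug these coefficients directly into the formula for the finite free multiplicative convolution:
\[
p(z) \boxtimes_d q(z) = \sum_{k=0}^d z^{d-k}(-1)^k \frac{a_k^p a_k^q}{\binom{d}{k}} = \sum_{k=0}^d z^{d-k}(-1)^k \binom{d}{k} P(d-k)Q(d-k).
\]
On the other hand, applying $P(zD)Q(zD)$ term by term to the expansion of $(z-1)^d$ produces the same expression, since $P(zD)Q(zD)z^{d-k} = P(d-k)Q(d-k) z^{d-k}$. This establishes the first equality $p(z)\boxtimes_d q(z) = P(zD)Q(zD)(z-1)^d$. The remaining equalities $P(zD)Q(zD)(z-1)^d = P(zD) q(z) = Q(zD) p(z)$ are immediate from the definitions of $p$ and $q$ together with the commutativity of $P(zD)$ and $Q(zD)$.

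There is really no substantive obstacle: the entire argument is driven by the diagonalization of $zD$ on monomials. The only place requiring minor care is keeping the indexing conventions straight, in particular that $a_i^p$ in the definition of $\boxtimes_n$ corresponds to the coefficient of $z^{d-i}$ (not $z^i$), so that the binomial factor $\binom{d}{i}$ coming from the expansion of $(z-1)^d$ cancels correctly against the $\binom{d}{k}^{-1}$ in the formula for the convolution.
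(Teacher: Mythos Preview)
Your proof is correct. The paper does not give its own proof of this lemma; it simply quotes the result from Mirabelli's thesis (with a pointer also to Marcus--Spielman--Srivastava), so there is nothing to compare against. Your argument via the diagonalization $(zD)z^j = jz^j$, which yields $a_i^p = \binom{d}{i}P(d-i)$ and then matches the coefficient formula for $\boxtimes_d$ directly, is the natural and standard way to see this statement, and your bookkeeping of the indexing conventions is accurate.
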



			A combinatorial description of finite free convolutions in terms of the posets of partitions is given in \cite{Arizmendi-Perales2018}, which we briefly describe. Before doing so, we will introduce the necessary combinatorial definitions and notations. A partition, $\pi = \{ V_1, \ldots, V_r \}$ of $[j] := \{ 1,\ldots, j\}$ is a collection of pairwise disjoint, non-null, sets $V_i$ such that $\cup_{i=1}^{r} V_i = [j]$. We refer to $V_i$ as the blocks of $\pi$, and denote the number of blocks of $\pi$ as $|\pi|$. The set of all partitions of $[j]$ is denoted $\mathcal{P}(j)$, and the set of all pair partitions, meaning partitions with $|V_i|=2$ for all $i$, is denoted $\mathcal{P}_2(j)$.
			
			The set of partitions can be equipped with the partial order $\preceq$ of reverse refinement, where we define $\pi \preceq \sigma$ if every block of $\pi$ is completely contained in a block of $\sigma$. The minimal element in this ordering is $0_j : = \{ \{1\}, \{2\}, \ldots, \{j\} \}$ and the maximal element is $1_j = \{\{1,2, \ldots, j \}\}$. The supremum of $\pi$ and $\sigma$ is denoted $\pi \vee \sigma$.  For a partition $\pi =  \{ V_1, \ldots, V_r \}$ and a sequence of numbers $\{c_n\}$, we use 
			\begin{equation} \label{eq:cpinotation}
				c_{\pi}:= \prod_{i=1}^{|\pi|}  c_{|V_i|},  \qquad c_{2\pi}= \prod_{i=1}^{|\pi|} c_{2|V_i|},
			\end{equation}
			and $(n)_{j}$ denotes the Pochhammer sequence, $(n)_j = \prod_{i=1}^j (n-i+1)$. 
			
			The M\"obius function (in the set of partitions) is given by 
			\[ \mu(\sigma,\rho):= (-1)^{|\sigma| - |\rho|} (2!)^{r_3} (3!)^{r_4} \ldots ((n-1)!)^{r_n}, \]
			where $r_i$ is the number of blocks of $\rho$ that contain exactly $i$ blocks of $\sigma$. Note that $\mu(\sigma,\rho) $ is $0$ unless $\sigma\preceq \rho$. In particular, we have:
			\[ \mu( 0_n, \rho) = (-1)^{n - |\rho|} \prod_{V \in \rho} (|V| - 1)! =  (-1)^{n - |\rho|} (2!)^{t_3} (3!)^{t_4} \ldots ((n-1)!)^{t_n},\]
			where $t_i$ is the number of blocks of $\rho$ of size $i$.\\

			Recall that we associate the empirical root distribution, $\mu_p$, to a polynomial $p$, given by:
			\[ \mu_p := \frac{1}{n} \sum_{i=1}^n \delta_{\lambda_i(p)}, \]
			where $\lambda_1(p), \ldots, \lambda_n(p)$ are the roots of $p$, counted with multiplicity. Furthermore, the moments and absolute moments of this measure are given by:
			\[ m_j(p) := \frac{1}{n} \sum_{i=1}^n \lambda_i(p)^j \quad  |m|_j(p) := \frac{1}{n} \sum_{i=1}^n |\lambda_i(p)|^j. \]
			From the Newton identities, the coefficients of $p$ can be recovered from the moments:
			\begin{equation} \label{eq:coeffmoment} a_k^p = \frac{1}{k!} \sum_{\pi \in P(k)} n^{|\pi|} \mu(0_k, \pi) m_\pi(p) 
			\end{equation}
			and
			\begin{equation} \label{eq:momentcoeff} m_j(p) = \frac{(-1)^j}{n(j-1)!} \sum_{\pi \in P(j)} (-1)^{|\pi|} N!_{\pi} (|\pi| -1)! a_{\pi}^p \end{equation}
			for $j \in [n]$, where $N!_{\pi} : = \prod_{V \in \pi} |V|!$.  Here, $m_\pi(p)$ and $a_{\pi}^p$ denote the products introduced in \eqref{eq:cpinotation}.

			In classical probability theory, it is often useful to work with the cumulants of a random variable, $X$, given by the coefficients of the log-moment generating function $K_X(t) = \log( \E[e^{t X}])$ of $X$. In \cite{Arizmendi-Perales2018}, the analogous finite free cumulants are defined in several ways, the most direct way is via \cite[Proposition 3.4]{Arizmendi-Perales2018}
			\begin{equation} \label{eq:cumulantcoeff}
				\kappa_j^n(p) : = \frac{ (-n)^j }{ n (j-1)!} \sum_{\pi \in {P(j)}} (-1)^{|\pi|} \frac{N!_{\pi} a_{\pi}^p ( |\pi| -1 )! }{ (n)_\pi },
			\end{equation}
			where $p$ is given in \eqref{eq:polypq}.  For a non-monic polynomial $p$ with leading coefficient $a$, we define the finite free cumulants of $p$ simply by $\kappa_{j}^{n}(p)=\kappa_j^{n}(a^{-1}p)$ for convenience of notation. 
			They can also be defined as the coefficients of the (truncated) $R$-transform, given in \cite{Marcus2021}. This relationship can be inverted (see \cite[Proposition 3.4]{Arizmendi-Perales2018}) to get:
			\begin{equation} \label{eq:coeffcumulant} 
				a_k^p = \frac{ (n)_k }{ n^k k! } \sum_{\pi \in \mathcal{P}(k)} n^{|\pi|} \mu(0_k, \pi) \kappa_\pi^n(p) 
			\end{equation}
			for $k \in [n]$.   By comparing \eqref{eq:coeffmoment} and \eqref{eq:coeffcumulant} we have: \begin{equation}
				\begin{aligned}
					\kappa_{1}^{n}(p)&=m_1(p)\\
					\kappa_{2}^{n}(p)&=\frac{n}{n-1}\left(m_{2}(p)-m_{1}(p)^2\right),
				\end{aligned}
			\end{equation} i.e., $\kappa_{1}^{n}$ is the mean of the roots and $\kappa_{2}^{n}$ is $\frac{n}{n-1}$ times the variance of the roots. 
			
			An important property of the finite free cumulants is that they linearize the finite free additive convolution, just as the classical cumulants linearize the convolution in classical probability, in the sense that:
			\begin{equation} \label{eq:cumulantadd}
				\kappa_j^n( p \boxplus_n q) =  \kappa_j^n( p )  +  \kappa_j^n(q), \qquad j \in [n]. \end{equation}
			
			 The most important property of finite free cumulants for our purposes is that they are very easy to compute after taking derivatives, as first pointed out in \cite{Arizmendi-Fujie-Perales-Ueda2024}.
			\begin{proposition}[Proposition 3.4 in \cite{Arizmendi-Fujie-Perales-Ueda2024}]\label{prop:cumulant der formula}
				Let $p_n$ be a monic degree $n$ polynomial, and let $p_{\ell,n}=\frac{\ell!}{n!}p_n^{(n-\ell)}$ be the monic version of its $(n-\ell)$-th derivative. Then, \begin{equation}
					\kappa_j^{\ell}\left(p_{\ell,n}\right)=\left(\frac{\ell}{n}\right)^{j-1}\kappa_{j}^{n}\left(p_n\right)
				\end{equation} for any $1\leq j\leq \ell$. 
			\end{proposition} We point out that Proposition \ref{prop:cumulant der formula} can be easily verified from \eqref{eq:cumulantcoeff} using only the power rule. In view of the additivity of cumulants, \eqref{eq:cumulantadd}, and their multilinearity, Proposition \ref{prop:cumulant der formula} also gives the cumulants of $n/\ell$-fractional convolution power, renormalized by $ \ell/n$. In light of this connection, it is not surprising high derivatives of polynomials satisfy a central limit theorem.

			In \cite{Arizmendi-GarzaVargas-Perales2023}, Lemma \ref{lem:diff and boxtimes}, was applied with $q(z) = z^n(z-1)^{d}$, the characteristic polynomial of a projection matrix, to show:
			\begin{equation} \label{eq:convderiv}
				p(z) \boxtimes_{n+d} z^n(z-1)^{d}= \frac{1}{(n+d)_n} z^n D^n p(z) .
			\end{equation} 
			for any $n+d$ degree polynomial $p(z)$, which in turn was used to relate fraction additive free convolution to repeated differentiation. This connection between the derivatives of polynomials and free convolutions, had already been noted in \cite{Steinerberger2020}. In standard free probability, this connection between multiplicative convolution with projection operators and fractional additive free convolution is well known \cite{MR1400060}.

			Combining \eqref{eq:cumulantcoeff} and \eqref{eq:coeffmoment}, or \eqref{eq:momentcoeff} and \eqref{eq:coeffcumulant} gives the following moment-cumulant formulas:
			
			\begin{proposition}[Theorem 4.2 from \cite{Arizmendi-Perales2018}]\label{prop:moment cumulant formula}
				Let $p$ be a monic polynomial of degree $n$. Then,  \begin{equation}\label{eq:cumulant moment form}
					\kappa_{j}^{n}(p)=\frac{(-n)^{j-1}}{(j-1)!}\sum_{\sigma\in\mathcal{P}(j)}n^{|\sigma|}\mu(0_{j},\sigma)m_{\sigma}(p)\sum_{\pi\geq\sigma}\frac{(-1)^{|\pi|-1}(|\pi|-1)!}{(n)_\pi},
				\end{equation} for $j=1,\dots,n$ and \begin{equation}\label{eq:moment cumulant form}
					m_{j}(p)=\frac{(-1)^{j-1}}{n^{j+1}(j-1)!}\sum_{\sigma\in\mathcal{P}(j)}n^{|\sigma|}\mu(0_{j},\sigma)\kappa_{\sigma}^{n}(p)\sum_{\pi\geq\sigma}(-1)^{|\pi|-1}(n)_{\pi}(|\pi|-1)!,
				\end{equation} for $j\in\N$.
			\end{proposition}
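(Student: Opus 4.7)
The plan is to derive both identities by chaining together the definitions of cumulants and moments in terms of coefficients recorded immediately before the proposition. For \eqref{eq:cumulant moment form}, I would start from \eqref{eq:cumulantcoeff}, which expresses $\kappa_j^n(p)$ as a sum over $\pi \in \mathcal{P}(j)$ involving products $a_\pi^p = \prod_{V \in \pi} a_{|V|}^p$, and substitute \eqref{eq:coeffmoment} for each factor $a_{|V|}^p$. This introduces, for each block $V$ of $\pi$, an internal sum over a partition $\sigma_V \in \mathcal{P}(|V|)$, and the collection $\{\sigma_V\}_{V \in \pi}$ assembles into a single partition $\sigma \in \mathcal{P}(j)$ with $\sigma \preceq \pi$. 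Because the quantities $n^{|\cdot|}$, $m_{\cdot}(p)$, $|\cdot|!$, and the Möbius contribution all factor multiplicatively over the blocks of $\pi$, one obtains the identity
\begin{equation*}
a_\pi^p \;=\; \frac{1}{N!_{\pi}} \sum_{\sigma \preceq \pi} n^{|\sigma|}\, \mu(0_j,\sigma)\, m_\sigma(p).
\end{equation*}
Plugging this back in, the factor $N!_{\pi}$ cancels, and swapping the order of summation (running $\sigma$ outside and $\pi \succeq \sigma$ inside) yields the desired formula after cleaning up the overall constant via $\tfrac{(-n)^j}{n} = -(-n)^{j-1}$ and $-(-1)^{|\pi|} = (-1)^{|\pi|-1}$.

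For \eqref{eq:moment cumulant form} I would follow the mirror route: start from \eqref{eq:momentcoeff}, substitute \eqref{eq:coeffcumulant} for each $a_{|V|}^p$, and perform the same multiplicative regrouping. The Pochhammer factors $(n)_{|V|}$ combine to give $(n)_\pi = \prod_{V \in \pi}(n)_{|V|}$, and the accompanying $n^{-|V|}$ factors collapse to $n^{-j}$; interchanging the order of summation and combining signs via $(-1)^j(-1)^{|\pi|} = (-1)^{j-1}(-1)^{|\pi|-1}$ then produces the claimed expression.

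The one genuine combinatorial input needed at both steps is the factorization of the partition Möbius function along blocks: if $\sigma \preceq \pi$ and $\sigma|_V$ denotes the partition of the block $V \in \pi$ induced by $\sigma$, then
\begin{equation*}
\prod_{V \in \pi} \mu\bigl(0_{|V|},\, \sigma|_V\bigr) \;=\; \mu(0_j,\sigma).
\end{equation*}
This follows immediately from the explicit formula $\mu(0_j,\rho) = \prod_{W \in \rho}(-1)^{|W|-1}(|W|-1)!$ recalled in the excerpt, by grouping the blocks of $\sigma$ according to which block of $\pi$ contains them. Beyond this identity, the argument is purely bookkeeping of factorials, Pochhammer symbols, and signs, so no single step presents a serious obstacle; the main care required is keeping track of the $N!_{\pi}$, $(n)_\pi$, and sign factors consistently through both the substitution and the interchange of the two summations.
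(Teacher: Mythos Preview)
Your proposal is correct and follows exactly the route the paper indicates: the paper does not give a detailed proof but simply states that the proposition follows by ``combining \eqref{eq:cumulantcoeff} and \eqref{eq:coeffmoment}, or \eqref{eq:momentcoeff} and \eqref{eq:coeffcumulant},'' and your argument spells out precisely that substitution, including the block-multiplicativity of $\mu(0_j,\cdot)$ and the bookkeeping of $N!_\pi$, $(n)_\pi$, and signs.
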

			
			We note that \eqref{eq:cumulant moment form} gives a crude bound on the cumulants in terms of the absolute moments:
			\begin{equation} \label{eq:cumulantmomentbound}
				|\kappa_{j}^{n}(p)| \leq C_j n^{-1} \sum_{\sigma \in \mathcal{P}(j)} n^{|\sigma|} |m|_{\sigma}(p) 
			\end{equation}
			for some constant $C_j > 0$.
			
			 We summarize the remaining facts needed for finite free cumulants in the following proposition. \begin{proposition}\label{prop: cumulant facts.}
				Let $\{p_n\}_{n\geq 1}$ and $p$ be monic polynomials of fixed degree $d$. Then, \begin{enumerate}
					\item The finite free cumulants $\kappa_{1}^{d}(p),\dots, \kappa_{d}^{d}(p)$ uniquely determine the polynomial $p$.
					
					\item $p_n(z)\rightarrow p(z)$ uniformly on compact subsets as $n\rightarrow\infty$ if and only if $(\kappa_{1}^{d}(p_n),\dots,\kappa_{d}^{d}(p_n)) \rightarrow (\kappa_{1}^{d}(p),\dots,\kappa_{d}^{d}(p))$  in any norm on $\R^{d}$. 
					
					\item If $p(z)=(z-a)^{d}$, then $\kappa_{j}^{d}(p)=a\delta_{j1}$ for any $1\leq j\leq d$, where $\delta_{ij}$ is the Kronecker delta function. Additionally, $\kappa_{j}^{d}(\He_{d})=d\delta_{j2}$ for any $1\leq j\leq d$. 
				\end{enumerate}
			\end{proposition}
			
			\begin{proof}
				The first claim follows immediately from \eqref{eq:cumulantcoeff} and \eqref{eq:coeffcumulant}.
				
				For the second claim, we first recall that all norms of $\R^d$ are equivalent. For convenience we will choose the $\ell^{2}$ norm.  Note by \eqref{eq:coeffcumulant} that the coefficients are given by polynomials $Q_{d,j}$ in the finite free cumulants, and hence continuous in the finite free cumulants. Thus, $(\kappa_{1}^{d}(p_n),\dots,\kappa_{d}^{d}(p_n)) \rightarrow (\kappa_{1}^{d}(p),\dots,\kappa_{d}^{d}(p))$  in $\ell^2(\R^{d})$ if and only if $(a_{1}^{p_n},\dots,a_{d}^{p_n}) \rightarrow (a_{1}^{p},\dots,a_{d}^{p})$  in $\ell^2(\R^d)$. It is then straightforward to show for any compact set $K\subset\C$, that $\sup_{z\in K}|p_n(z)-p(z)|\rightarrow0$ if and only if $\|(a_{1}^{p_n},\dots,a_{d}^{p_n}) - (a_{1}^{p},\dots,a_{d}^{p})\|_{2}\rightarrow 0$. 
				
				For a monic degree $d$ polynomial $p$, let $\hat{p}$ be any formal power series such that $\hat{p}(D)z^d=p(z)$. \cite[Definition 3.2]{Arizmendi-Perales2018} originally define the finite free cumulants as the coefficients mod $s^{d}$ of the formal power series \begin{equation}
					-\frac{\hat{p}'(sd)}{\hat{p}(sd)}\mod s^{d}=\sum_{k=1}^{d}\kappa_{k}^{d}(p)s^{k-1}.
				\end{equation} 
				
				The third claim can be verified after noting $(z-a)^{d}=e^{-aD}z^d$ and $\He_{d}(z)=e^{-\frac{D^2}{2}}z^d$. 
			\end{proof}

			We conclude this section with a discussion of limit theorems in finite free probability. In \cite[Section 6]{Marcus2021} and \cite[Section 6]{Arizmendi-Perales2018} limit theorems are given for a growing number of convolutions of fixed degree polynomials. These limits are analogous to the well known Law of Large Numbers and Central Limit Theorem in classical probability theory, which roughly state that the classical convolution-powers of a measure converge to the mean of the measure or to a Gaussian distribution, depending on their rescaling. Among other similar results, \cite{Marcus2021} and \cite{Arizmendi-Perales2018} show that if $\{p_i\}_{i \in \N}$ is a sequence of degree $d$ polynomials each with $m_1(p_i) =0$ and $m_2(p_i) =1$, then the $n$-fold finite free convolution, $n^{-nd/2} p_1(\sqrt{n} z) \boxplus_d \ldots \boxplus_d p_n(\sqrt{n} z) $ converges to the $d^{th}$ Hermite polynomial as $n \to \infty$.  
			
			In other words, the Hermite polynomials play the role of the Gaussian distribution in finite free probability, in particular all of the (finite free) cumulants after the second vanish for both distributions. The emergence of Hermite polynomials in the finite free central limit theorem and in Theorem \ref{thm:CLT} does not seem to be coincidental. 
			

			
			\section{Proofs of the theorems in Section \ref{sec: main results uni principles}}\label{sec:proofsuni} Throughout this section, we will assume $f$ satisfies Assumption \ref{assump:root density}. \Cr Specifically, the following lemma, {\cedittwo which we prove in Section \ref{sec:proof_of_coefficients},} is available throughout. \begin{lemma}\label{lem:coefficients from roots}
				Let $f$ be an even entire function, as represented in \eqref{eq:even series representation}, which satisfies Assumption \ref{assump:root density}. Then, {\cedittwo with $a_n$ as in  \eqref{eq:defan}},
				\begin{equation}\label{eq:coeff ratio}
					-\frac{\gamma_{2n}}{\gamma_{2(n-1)}}=a_n^{-1}(1+o(1)),
				\end{equation} and \begin{equation}\label{eq:ratio of coeff ratio condition}
					\lim_{n\rightarrow\infty}\frac{\gamma_{2(n-2)}\gamma_{2n}}{\gamma_{2(n-1)}^2}=1.
				\end{equation} 
			\end{lemma} \nc We will prove Theorem \ref{thm:Cosine uni}-\ref{thm:Laguerre uni} by applying the results of Section \ref{sec:finite free limit theorems}. 
			We begin by specializing the formulas for finite free cumulants, \eqref{eq:cumulantcoeff}, to Jensen polynomials: 
			\begin{equation}\label{eq:even Jensen cumulant formula}
				\kappa_{j}^m(J_{m,0})=\frac{m^{j-1}}{(j-1)!}\sum_{\pi\in\mathcal{P}(j)} (-1)^{|\pi|}(|\pi|-1)!\frac{(2m)_{2\pi}}{(m)_\pi}\frac{\gamma_{2(m-\pi)} }{\gamma_{2m}^{|\pi|}}.
			\end{equation}
			In particular, for $j = 1,2$, we have:
			\begin{equation}\label{eq:even Jensen cumulant first two}
				\begin{aligned}
					\kappa_{1}^{m}\left(J_{m,0} \right)&=-2(2m-1)\frac{\gamma_{2(m-1)}}{\gamma_{2m}},\\
					\kappa_{2}^{m}\left(J_{m,0} \right)&=4(2m-1)^2m\left[\left(\frac{\gamma_{2(m-1)}}{\gamma_{2m}}\right)^2-\frac{2m-3}{2m-1}\frac{\gamma_{2(m-2) }}{\gamma_{2m}} \right].
				\end{aligned}
			\end{equation} 
			%
			Additionally, we can essentially factor out $\left(\kappa_{1}^{m}\right)^{j}$ from \eqref{eq:even Jensen cumulant formula} to make the relative growth of the finite free cumulants more transparent: 
			\begin{equation}\label{eq:even cumulant asympt}
				\begin{aligned}
					\kappa_{j}^{m}\left(J_{m,0} \right)=\frac{m^{j-1}2^{j}(2m-1)^{j}}{(j-1)!}\left(\frac{\gamma_{2(m-1)}}{\gamma_{2m}} \right)^{j}\sum_{\pi\in\mathcal{P}(j)} \frac{(-1)^{|\pi|}(|\pi|-1)!}{2^{j}(2m-1)^{j}}\frac{(2m)_{2\pi}}{(m)_\pi}\frac{\gamma_{2(m-\pi)} }{\gamma_{2m}^{|\pi|-j}\gamma_{2(m-1)}^{j}}.
				\end{aligned}
			\end{equation}
			We will primarily use \eqref{eq:ratio of coeff ratio condition} to simplify the summation on the right-hand side of \eqref{eq:even cumulant asympt}. Recently, \cite{Arizmendi-Fujie-Perales-Ueda2024} defined an $S$-transform for finite free probability by using the ratio of the coefficients of a polynomial. In the language of \cite{Arizmendi-Fujie-Perales-Ueda2024}, \eqref{eq:coeff ratio} and \eqref{eq:ratio of coeff ratio condition} can be translated into statements on the finite free $S$-transforms of the even Jensen polynomials evaluated near $0$. 
			

			\subsection{Proof of Theorem \ref{thm:Laguerre uni}} 
			
			We will prove Theorem \ref{thm:Laguerre uni} by applying Theorem \ref{thm:Poisson limit theorem}. 
			
			\begin{proof}[Proof of Theorem \ref{thm:Laguerre uni}]
				We begin by showing that after rescaling the argument of $W_{n+d,0} = J_{n+d,0}$ by $a_n$, the first 2 cumulants satisfy the appropriate assumptions.
				After rescaling the argument of \eqref{eq:even Jensen cumulant first two} by $a_n$, we have
				\[ \kappa_{1}^{n+d}\left(W_{n+d,0}(a_n z) \right)=-2(2(n+d)-1)\frac{\gamma_{2(n+d-1)}}{\gamma_{2(n+d)}} a_n^{-1} . \]
				So by the definition of $a_n$ in \eqref{eq:defan}, we have 
				\[  \kappa_{1}^{n+d}\left(W_{n+d,0}(a_n z) \right)/n \sim 4 .\]
				Furthermore, rearranging the expression for $\kappa_2$ as in \eqref{eq:even cumulant asympt}, gives
				\[ \kappa_{2}^{n+d}(W_{n+d,0}) = (n+d) \kappa_{1}^{n+d}(W_{n+d,0})^2  \left[  1 -\frac{2(n+d)-3}{2(n+d)-1}\frac{\gamma_{2(n+d-2)}  \gamma_{2(n+d)} }{\gamma_{2(n+d-1)}^2 }                \right]
				\]	 
				Then, by taking $n\to\infty$ and applying Lemma \ref{lem:coefficients from roots}, we see that $\kappa_{2}^{n+d}(W_{n+d,0})=o((n+d)\kappa_{1}^{n+d}(W_{n+d,0})^{2})$. In particular we have that $\kappa_{2}^{n+d}\left(W_{n+d,0}(a_n z) \right) = o (n^3)$, as required in Theorem \ref{thm:Poisson limit theorem}.

				
				Recall the definition of $M$ given in \eqref{eq:M definition}. In order to apply Theorem \ref{thm:Poisson limit theorem}, we first rewrite $\frac{1}{\gamma_{2n}}W_{d,n}(a_nz)$ as the application of $M^n$ to a sequence of monic polynomials:
				\begin{equation}
					\begin{aligned}
						\frac{1}{\gamma_{2n}}W_{d,n}(a_nz)&=\frac{1}{\gamma_{2n}}\frac{a_{n}^{-n}}{(n+d)_{n}}M^{n}W_{d+n,0}(a_nz)\\
						&=\frac{1}{\gamma_{2n}}\frac{d!}{(2d)!}\frac{(2n+2d)!}{(n+d)!}\frac{a_{n}^{-n}}{(2n+2d)_{2n}}M^{n}W_{d+n,0}(a_nz)\\
						&=\frac{\gamma_{2(n+d)}}{\gamma_{2n}}\frac{d!}{(2d)!} \frac{a_{n}^{d}}{(2n+2d)_{2n}}M^{n} \frac{(2n+2d)!}{(n+d)!\gamma_{2(n+d)} a_{n}^{n+d}}  W_{d+n,0}(a_nz), 
					\end{aligned}
				\end{equation} 
				where in the final line we have multiplied  $W_{d+n,0}(a_nz)$ by the appropriate term to make it monic.
				By the Lemma \ref{lem:coefficients from roots} we have
				\begin{equation}
					\lim_{n\rightarrow\infty}\frac{\gamma_{2n+2d}}{\gamma_{2n}}a_n^{d}=(-1)^d. 
				\end{equation} 
				Hence, Theorem \ref{thm:Laguerre uni} follows by applying Theorem \ref{thm:Poisson limit theorem}, and rescaling the argument by $1/4$ to account for the asymptotic $\kappa_{1}^{n+d}\left(J_{n+d,0}(a_n z) \right) \sim 4 n$. 
			\end{proof}
			\subsection{Proof of Theorem \ref{thm:Hermite uni}} To prove Theorem \ref{thm:Hermite uni}, we will apply Theorem \ref{thm:CLT} to the polynomials $J_{d+n,0}$. 
			
			\begin{proof}[Proof of Theorem \ref{thm:Hermite uni}]
				As can be seen directly from \eqref{eq:even Jensen cumulant first two} $b_n$ and $c_n$ are exactly the first and second finite free cumulants of $J_{d+n,0}$ respectively. So to apply Theorem \ref{thm:CLT} it suffices to show $|m|_{3}(J_{d+n,0}) = o(\sqrt{(n+d)c_n^3})$. 
				Furthermore, since the roots of $J_{n+d,0}$ are non-negative, we have 
				$ |m|_{3}(J_{d+n,0}) = m_{3}(J_{n+d}) $. 
			
			Notice, from Proposition \ref{prop:moment cumulant formula} that the leading order of $m_{3}$ is in fact $\kappa_{3}^{n+d}(J_{n+d})$ and it is in fact enough to prove the same bound for the third cumulant. Let  $r(m)$ be such that \begin{equation}
				\frac{\gamma_{2(m-2)}\gamma_{2m}}{\gamma_{2(m-1)}^2}=1+r(m).
			\end{equation} We will assume $|r(m)|\gg \frac{1}{m}$, otherwise a similar proof to the following is also available. One can check directly from \eqref{eq:even cumulant asympt} that 
			\begin{equation}
				\begin{aligned}
					\frac{\kappa_{3}^{n+d}(J_{n+d,0})}{c_{n}^{3/2}}&=\frac{\sqrt{n+d}}{2}\left[\frac{2+\frac{(2(n+d)-3)(2(n+d)-5)}{(2(n+d)-1)^2}\frac{\gamma_{2(n+d-3)}\gamma_{2(n+d)}^2}{\gamma_{2(n+d-1)}^3}-3\frac{2(n+d)-3}{2(n+d)-1}\frac{\gamma_{2(n+d)}\gamma_{2(n+d-2)} }{\gamma_{2(n+d-1)}^2} }{1-\frac{2(n+d)-3}{2(n+d)-1}\frac{\gamma_{2(n+d)}\gamma_{2(n+d-2)}}{\gamma_{2(n+d-1)}^2 } }\right]\\
					&\lesssim \sqrt{n+d}\frac{|r(n+d)|^2+o(|r(n+d)|^2) }{[|r(n+d)|+o(|r(n+d)|)]^{3/2}}=o(\sqrt{n+d}).
				\end{aligned}
			\end{equation} 
			Thus, we can apply Theorem \ref{thm:CLT} to complete the proof of Theorem \ref{thm:Hermite uni}.
		\end{proof}

		\subsection{Proof of Theorem \ref{thm:Cosine uni}} We will prove Theorem \ref{thm:Cosine uni} by showing the power series for rescaled $2n^{th}$ derivative of $f$ converges to the power series of cosine.
		
		\begin{proof}[Proof of Theorem \ref{thm:Cosine uni}]
			From Proposition \ref{prop:cosine as finite free poisson} and Theorem \ref{thm:Laguerre uni} we know that the even Jensen polynomials of $\frac{1}{\gamma_{2n}}f^{(2n)}(\sqrt{a_n}z)$ converge to the even Jensen polynomials of $\cos(z)$. Thus, for any fixed $d\in\N$ the first $2d$ coefficients in the series expansion of $\frac{1}{\gamma_{2n}}f^{(2n)}(\sqrt{a_n}z)$ converge to those of $\cos(z)$. We now show the remaining coefficients are negligible. Fix  a compact subset $K\subset\C$ and $\eps>0$.  For $n\in\N$, define \begin{equation}
				\widetilde{f}_{n}(z)=\frac{1}{\gamma_{2n}}f^{(2n)}(\sqrt{a_n}z) = \sum_{k=0}^\infty \frac{\gamma_{2(k+n)}}{\gamma_{2n} (2k)!} a_n^k z^{2k}.
			\end{equation} We let $b_{k,n}: =  \frac{\gamma_{2(k+n)}}{\gamma_{2n} (2k)!} a_n^k$ so we have:\begin{equation}
				\widetilde{f}_{n}(z)=\sum_{k=0}^{\infty}b_{k,n}z^{2k}.
			\end{equation} \Cr At this point we recall that $n_{+}$ is regularly varying by Assumption \ref{assump:root density}, and hence  $a_{n}$ is regularly varying, see \cite[Proposition 0.8]{Resnick87}, and $a_{n}=\widetilde{h}(n)n^{-2+\frac{2}{\alpha}}$ for some \emph{slowly varying} function $\widetilde{h}$\nc. From the Karamata representation theorem, see \cite[Theorem 0.6]{Resnick87}, there exists $c,w:(0,\infty)\rightarrow[0,\infty)$ such that $c(x)\rightarrow c\in(0,\infty)$ as $x\rightarrow\infty$, $w(x)\rightarrow0$ as $x\rightarrow\infty$, and \begin{equation}\label{eq:Kara}
				\widetilde{h}(x)=c(x)\exp\left(\int_{1}^{x}\frac{w(t)}{t}dt\right).
			\end{equation}  We additionally assume $n$ is sufficiently large such that the $1+o(1)$ term in the right-hand side of \eqref{eq:coeff ratio} is in $(1/2,2)$, that $\sup_{x\geq0}c(n+x)\geq c/2$, and that $\sup_{x\geq0} w(n+x)\leq \frac{1}{2}$.
			
			It follows from \eqref{eq:Kara} and the definition of $a_n$ that, for any $k=0,1,\dots$,
			\begin{equation}
				\begin{aligned}
					\left|\frac{\gamma_{2(n+k)} }{\gamma_{2n} } a_n^k \right|&=  \prod_{i=0}^{k-1} \left|\frac{\gamma_{2(n+i+1)} }{\gamma_{2(n+i)} } a_n \right|\\
					&\lesssim 2^{\left|2-\frac{2}{\alpha} \right|k }\prod_{i=0}^{k-1}\frac{\widetilde{h}(n)}{\widetilde{h}(n+i)}\left(1+\frac{i}{n}\right)^{2-\frac{2}{\alpha}}\\
					&\lesssim 2^{\left|2-\frac{2}{\alpha} \right|k+k }c^{-k}k^{\frac{k}{2}}\prod_{i=0}^{k-1}\left(1+\frac{i}{n}\right)^{2-\frac{2}{\alpha}}\\
					&\lesssim 2^{\left|2-\frac{2}{\alpha} \right|k+k }c^{-k}k^{\frac{3k}{2}}.
				\end{aligned}
			\end{equation} 
				Hence, for $n$ sufficiently  large\begin{equation}
					\left|b_{k,n}\right| \lesssim \frac{k^{c'k}}{(2k)!},
				\end{equation} uniformly in $k\in\N$ for some $c'\in (0,2)$.
				

				
					
					Thus, there exists $d\in\N$ such that for $n$ sufficiently large \begin{equation} 
						\sum_{k=2d+2}^{\infty}|b_{k,n}|C^{2k}<\eps,
					\end{equation} 
					where $C$ is chosen sufficiently large that $|z| < C$ for all $z \in K$.
					We then conclude that \begin{equation}
						\begin{aligned}
							\limsup_{n\rightarrow\infty} \sup_{z\in K}|\widetilde{f}_{n}(z)-\cos(z)|&< \limsup_{n\rightarrow\infty} \sum_{k=0}^{2d}\left|b_{k,n}-\frac{(-1)^k}{(2k)!} \right|C^{2k}+\eps\\
							&=\eps.
						\end{aligned}
					\end{equation} As $\eps>0$ was arbitrary, the proof is complete.
				\end{proof}
				
				\section{Proofs of the theorems in Section \ref{sec:finite free limit theorems}}\label{sec:limit theorems proofs} 
				
				We begin by fixing $d\in\N$ and letting $D$ be the derivative operator.  From Proposition \ref{prop:cumulant der formula} we note that \begin{equation}\label{eq:applied cumulant der formula LLN}
					\kappa_{j}^{d}\left(\frac{1}{(n+d)_{n}}D^{n}P_{n+d}\right)=\left(\frac{d}{n+d}\right)^{j-1}\kappa_{j}^{n+d}\left(P_{n+d}\right),
				\end{equation} for any $1\leq j\leq d$. Additionally, \begin{equation}\label{eq:applied cumulant der formula CLT}
					\kappa_{j}^{d}\left(\frac{1}{(n+d)_{n}}D^{n}\DD_{\sqrt{n+d}}P_{n+d}\right)=\left(\frac{d}{n+d}\right)^{j-1}(n+d)^{j/2}\kappa_{j}^{n+d}\left(P_{n+d}\right),
				\end{equation} for any $1\leq j\leq d$. In what follows we will use that, for any $1\leq r\leq k<\infty$, we have the bound:
				\begin{equation}\label{eq:higher moment bounds}
					|m_k|\left(P_{n+d}\right)\leq (n+d)^{\frac{k}{r}-1}|m_{r}|\left(P_{n+d}\right)^{k/r},
				\end{equation} which follows from convexity or standard bounds on $\ell^{p}(\R^{n+d})$-norms.
				
				\begin{proof}[Proof of Theorem \ref{thm:Law of large numbers} ] 
					It follows from \eqref{eq:cumulantmomentbound}, the assumption that $m_{2}(P_{n+d})=o(n+d)$, and \eqref{eq:higher moment bounds} that \begin{equation}\label{eq:LLN cumulant bound}
						\left|\kappa_{j}^{n+d}(P_{n+d})\right|=o\left((n+d)^{j-1} \right),
					\end{equation} for any $j\geq 2$. Fix $1\leq j\leq d$. It then follows from \eqref{eq:LLN cumulant bound} and \eqref{eq:applied cumulant der formula LLN} that \begin{equation}
						\lim\limits_{n\rightarrow\infty}	\kappa_{j}^{d}\left(\frac{1}{(n+d)_{n}}D^{n}P_{n+d}\right)=a\delta_{j1},
					\end{equation} where $\delta_{ij}$ is the Kronecker delta function. The proof is then complete by applying Proposition \ref{prop: cumulant facts.}.
				\end{proof}
				
				\begin{proof}[Proof of Theorem \ref{thm:CLT}]
					Fix $1\leq j\leq d$. Using \eqref{eq:cumulantmomentbound}, \eqref{eq:applied cumulant der formula CLT}, and \eqref{eq:higher moment bounds} as in the proof of Theorem \ref{thm:Law of large numbers}, but instead using the assumption $m_2(P_m) \to 1$, we see that \begin{equation}
						\lim\limits_{n\rightarrow\infty}	\kappa_{j}^{d}\left(\frac{1}{(n+d)_{n}}\DD_{\sqrt{n+d}}D^{n}P_{n+d}\right)=d\delta_{j2},
					\end{equation} where $\delta_{ij}$ is the Kronecker delta function. The proof is then complete by applying Proposition \ref{prop: cumulant facts.}.
				\end{proof}

					\begin{proof}[Proof of Corollary \ref{cor:limits of Wigner characteristic functions}]
						It follows from well known results in random matrix theory  (see for example Theorems 2.3.24 and 2.4.2 in \cite{Tao2012})  that almost surely
						\begin{equation}
							\frac{\kappa_{2}^{n+d}(\Phi_{n+d})}{n}\sim 1,\quad \kappa_{1}^{n+p}(\Phi_{n+d})=o\left(1 \right),\text{ and}\quad m_{4}^{n+d}(\Phi_{n+d})=O(n^2).
						\end{equation} Hence, shifting and normalizing $\Phi_{n+d}$ to have mean $0$ and second cumulant $1$ is undone by the rescaling in Theorem \ref{thm:CLT}. So we may apply Theorem \ref{thm:CLT} directly to $\Phi_{n+d}$.
					\end{proof}

					\subsection{Poisson limit theorem and the proof of Theorem \ref{thm:Poisson limit theorem}} In this section we prove Theorem \ref{thm:Poisson limit theorem} as corollary of Theorem \ref{thm:Law of large numbers}. An alternative proof is available by considering $D^{2n}P_{n+d}(z^2)$, applying Theorem \ref{thm:CLT}, and using the relationship between Hermite and Laguerre polynomials. However, we choose the below proof as it can be generalized under straightforward modifications to repeated application of the more general operators discussed in Remark  \ref{rem:General laguerre}. For clarity of presentation we consider only the operator $M$.
					
					\begin{proof}[Proof of Theorem \ref{thm:Poisson limit theorem}]
						We define a new array of polynomials $\{W_{d,n}\}_{d,n=1}^\infty$ by \begin{equation}
							W_{d,n}(z)=\frac{1}{(2n+2d)_{2n}}M^nW_{n+d,0}(z),\quad W_{m,0}(z)=P_{m}(z), 
						\end{equation} where $M$ is defined by \eqref{eq:M definition}.
						
						We then factorize $M^n$ into the product of $D^n$ and a differential operator that only depends on $z$ and $D$ through $zD$:
						\begin{equation}\label{eq:M decomposition}
							\frac{1}{(2n+2d)_{2n}}M^n=\frac{(n+d)_{n}}{(2n+2d)_{2n}}\mathcal{F}_n\frac{D^n}{(n+d)_n},
						\end{equation} where \begin{equation}\label{eq:Fn definition}
							\mathcal{F}_n=\prod_{k=0}^{n-1}\left(2k+[1+2zD ] \right), 
						\end{equation} which is a polynomial in $zD$. As an operator on polynomials, $\mathcal{F}_n$ is degree preserving. Then, by Lemma \ref{lem:diff and boxtimes} \begin{equation} \label{eq:MnP}
							\begin{aligned}
								\frac{1}{(2n+2d)_{2n}}M^n P_{n+d}(z)&=\frac{(n+d)_{n}}{(2n+2d)_{2n}}\mathcal{F}_n\frac{D^n}{(n+d)_n} P_{n+d}(z)\\
								&=\frac{D^n}{(n+d)_n} P_{n+d}(z)\boxtimes_{d} \frac{(n+d)_{n}}{(2n+2d)_{2n}}\mathcal{F}_n(z-1)^d\\
								&=\frac{D^n}{(n+d)_n}n^{-n-d} P_{n+d}(nz)\boxtimes_{d} \frac{(n+d)_{n}}{(2n+2d)_{2n}}\DD_{n}\mathcal{F}_n(z-1)^d.
							\end{aligned}
						\end{equation}  In the last term we simply moved some scaling from $P_{n+d}$ to $(z-1)^d$ using that scalars can be moved across multiplicative convolutions. We know from Theorem \ref{thm:Law of large numbers} that \begin{equation}
							\lim\limits_{n\rightarrow\infty} \frac{D^n}{(n+d)_n}n^{-n-d} P_{n+d}(nz)=(z-a)^d.
						\end{equation} 
						In the basis $\left\{z^{0},z^{1},\dots,z^{d} \right\}$ $\mathcal{F}_n$ is a diagonal matrix:
						\begin{equation} 
							\mathcal{F}_n z^j =2^{n}\prod_{k=0}^{n-1}\left(k+\frac{1}{2}+j \right) z^j 
						\end{equation} for $0\leq j\leq d$. 
						For each $j\in [d]$, we factorize the coefficient as: \begin{equation*}
							\begin{aligned} 
								\prod_{k=0}^{n-1}\left(k+\frac{1}{2}+j \right)  
								&=  \left(n+j-\frac{1}{2} \right)_{j} \left(n-\frac{1}{2} \right)_{n-d} \left(d-\frac{1}{2} \right)_{d-j} .
							\end{aligned}
						\end{equation*}
						Hence \begin{equation*}		
							\begin{aligned}
								\mathcal{F}_n(z-1)^{d}&=2^{n} \sum_{j=0}^{d}\binom{d}{j}(-1)^{d-j} \left( \left(n+j-\frac{1}{2} \right)_{j} \left(n-\frac{1}{2} \right)_{n-d} \left(d-\frac{1}{2} \right)_{d-j} \right) z^j \\
								&=2^{n}d!(-1)^{-d}\left(n-\frac{1}{2}\right)_{n-d}\sum_{j=0}^{d}\frac{\left(d-\frac{1}{2} \right)_{d-j}}{j!(d-j)!} \left(n+j-\frac{1}{2} \right)_{j}  (-1)^jz^j. 
							\end{aligned}
						\end{equation*} 
						
						We then rescale the argument by $n$ to get:
						\begin{equation}\label{eq:F_n limit with scaling computation}
							\begin{aligned}
								\DD_{n} \mathcal{F}_n(z-1)^{d}	&=2^{n}d!(-1)^{-d}\left(n-\frac{1}{2}\right)_{n-d}{n}^d \sum_{j=0}^{d}\frac{\left(d-\frac{1}{2} \right)_{d-j}}{j!(d-j)!}\frac{\left(n+j-\frac{1}{2} \right)_{j}}{n^j}(-1)^jz^j. 
							\end{aligned}
						\end{equation}

						%
						Since $\left(n+j-\frac{1}{2} \right)_{j}$ is a monic degree $j$ polynomial in $n$, we have that $\frac{\left(n+j-\frac{1}{2} \right)_{j}}{n^{j}}\rightarrow 1$ as $n\rightarrow\infty$. We note that the $n$-dependent coefficient on the right most side of \eqref{eq:F_n limit with scaling computation} exactly cancels with the scaling term in \eqref{eq:MnP}:
						\begin{equation}
							\lim\limits_{n\rightarrow\infty}\frac{(n+d)_{n}}{(2n+2d)_{2n}}2^{n}n^d\left(n-\frac{1}{2}\right)_{n-d}=1.
						\end{equation} 
						So $\frac{(n+d)_{n}}{(2n+2d)_{2n}}\DD_{n}\mathcal{F}_n(z-1)^d$ converges to the monic polynomial: 
						\[ d!(-1)^{-d} \sum_{j=0}^{d}\frac{ (-1)^{-d} \left(d-\frac{1}{2} \right)_{d-j}}{j!(d-j)!}z^j  =  d!  (-1)^{-d}L_{d}^{\left(-\frac{1}{2}\right)}(z). \]
						Hence, \begin{equation}
							\lim\limits_{n\rightarrow\infty} W_{d,n}(z)=(z-a)^{d}\boxtimes_{d} d!(-1)^{-d}L_{d}^{\left(-\frac{1}{2}\right)}(z).
						\end{equation} Normalizing such that $a=1$ completes the proof.
					\end{proof}
					
					\section{Proof of Lemma \ref{lem:coefficients from roots} and results in complex analysis} \label{sec:proof_of_coefficients} In this section we collect some results in complex analysis and prove Lemma \ref{lem:coefficients from roots}. First, we note that \eqref{eq:ratio of coeff ratio condition} follows from \eqref{eq:coeff ratio}, and hence we focus only on the latter. 
					
					If $f(0)$ is any non-zero constant we note that the ratios of coefficients of $f$ and $\frac{1}{f(0)}f$ are identical. If $f$ has a zero at the origin of order $2\ell$, then the coefficients of $f$ and $z^{-2\ell}f$ differ by shifting the index $n$ to $n+\ell$. The resulting difference in the ratios of coefficients can be absorbed into the $1+o(1)$ term on the right hand side of \eqref{eq:coeff ratio}. Thus, for simplicity we assume without loss of generality in this section that $f(0)=1$ and let $g(z)=f(\sqrt{z})$.  We express $g$ as the product and define the series coefficients $e_{m}$ by \begin{equation}\label{eq:g product and series}
						g(z)=\prod_{k=1}^{\infty}\left(1-\frac{z}{x_{k}^2}\right)=\sum_{k=0}^{\infty}(-1)^{k}e_{k}z^{k}.
					\end{equation} For simplicity, we denote the square of the roots by $r_{k}=x_{k}^{2}$ and let $n(r)=|\{k\in\N:r_{k}\in [0,r] \}|$. From Assumption \ref{assump:root density} $n(r)\sim \widehat{h}(r)r^{\alpha/2}$ for some positive slowly varying function $\widehat{h}$. We define $\rho=\frac{\alpha}{2}$. Before we proceed we note that standard computations yield the following integrals, which we will use in the limits below. 
					\begin{lemma} \label{lem:calc_integrals}
						Let $\rho \in (0, 1)$.  Then,
						\begin{align} \label{eq:calcint1}
							\int_0^\infty \frac{u^\rho}{(1 + u)^2} \,du &= \rho \pi \csc(\pi \rho), \\
							\int_0^\infty \frac{u^\rho}{(1 + u)^3} \,du &= \frac{1}{2} \rho (1-\rho) \pi \csc(\pi \rho),  \label{eq:calcint2}
						\end{align}
						and
						\begin{equation} \label{eq:calcint3}
							\int_0^\infty \frac{u^\rho}{u(1 + u)} \,du = \pi \csc(\pi \rho). 
						\end{equation}
					\end{lemma}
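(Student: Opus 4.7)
The plan is to recognize all three integrals as special cases of the Beta function, applied together with Euler's reflection formula. Recall the standard identity
\[
\int_0^\infty \frac{u^{s-1}}{(1+u)^{s+t}}\,du \;=\; B(s,t) \;=\; \frac{\Gamma(s)\Gamma(t)}{\Gamma(s+t)},
\]
valid whenever $\operatorname{Re} s, \operatorname{Re} t > 0$, together with
\[
\Gamma(\rho)\Gamma(1-\rho) \;=\; \pi \csc(\pi\rho), \qquad \rho \in (0,1).
\]
Both are classical and may be quoted directly; in a self-contained writeup I would prove the reflection formula (equivalently, \eqref{eq:calcint3} itself) by applying the residue theorem to a keyhole contour around the positive real axis for $z^{\rho-1}/(1+z)$, where the jump across the branch cut produces the factor $1 - e^{2\pi i \rho}$ and the single residue at $z=-1$ contributes $e^{i\pi(\rho-1)}$, so that rearranging gives $\int_0^\infty u^{\rho-1}(1+u)^{-1}\,du = \pi\csc(\pi\rho)$.

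Given these two ingredients, each of the three identities is a one-line computation. For \eqref{eq:calcint3}, rewrite $u^\rho/(u(1+u)) = u^{\rho-1}/(1+u)$ and apply the Beta formula with $(s,t)=(\rho,1-\rho)$, obtaining $\Gamma(\rho)\Gamma(1-\rho) = \pi\csc(\pi\rho)$. For \eqref{eq:calcint1}, take $(s,t) = (\rho+1, 1-\rho)$, giving
\[
\int_0^\infty \frac{u^\rho}{(1+u)^2}\,du \;=\; \frac{\Gamma(\rho+1)\Gamma(1-\rho)}{\Gamma(2)} \;=\; \rho\,\Gamma(\rho)\Gamma(1-\rho) \;=\; \rho\pi\csc(\pi\rho).
\]
For \eqref{eq:calcint2}, take $(s,t) = (\rho+1, 2-\rho)$, so
\[
\int_0^\infty \frac{u^\rho}{(1+u)^3}\,du \;=\; \frac{\Gamma(\rho+1)\Gamma(2-\rho)}{\Gamma(3)} \;=\; \frac{\rho(1-\rho)\,\Gamma(\rho)\Gamma(1-\rho)}{2} \;=\; \tfrac12 \rho(1-\rho)\pi\csc(\pi\rho).
\]

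If one prefers to avoid invoking the Beta function twice, one may derive \eqref{eq:calcint1} and \eqref{eq:calcint2} from \eqref{eq:calcint3} by integration by parts. Writing $d(-(1+u)^{-1}) = (1+u)^{-2}\,du$ and using $u^\rho \to 0$ at $u=0$ and $u^\rho/(1+u) \to 0$ at $u=\infty$ (both since $0<\rho<1$), the boundary terms vanish and one obtains $\int_0^\infty u^\rho(1+u)^{-2}\,du = \rho\int_0^\infty u^{\rho-1}(1+u)^{-1}\,du$, recovering \eqref{eq:calcint1} from \eqref{eq:calcint3}. Integrating by parts once more with $d(-\tfrac12(1+u)^{-2}) = (1+u)^{-3}\,du$ and again discarding the vanishing boundary terms yields \eqref{eq:calcint2} from the analogous evaluation of $\int_0^\infty u^{\rho-1}(1+u)^{-2}\,du$.

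There is no real obstacle; the only decision is stylistic, namely whether to invoke the Beta function and reflection formula as black boxes or to reprove the keyhole-contour identity. Either way, checking convergence is immediate from $\rho \in (0,1)$: integrability at $u=0$ requires the exponent of $u$ to exceed $-1$, which holds since $\rho > 0$ (and $\rho - 1 > -1$), while integrability at $u=\infty$ is ensured by the denominator dominating by a power strictly greater than one.
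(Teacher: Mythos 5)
Your proof is correct; all three integrals are indeed Beta integrals $B(s,t)$ with the parameter choices you give, and the reflection formula $\Gamma(\rho)\Gamma(1-\rho)=\pi\csc(\pi\rho)$ yields exactly the stated values. The paper offers no proof of this lemma at all---it simply asserts that ``standard computations yield the following integrals''---and your Beta-function argument is precisely the standard computation being invoked, so there is nothing to reconcile.
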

					
						
						\subsection{Proof of Lemma \ref{lem:coefficients from roots}} We use a saddle point argument similar to that of \cite{Pemantle-Subramanian2017}. 
						
						\begin{proof} Beginning from Cauchy's integral formula \begin{equation}\label{eq:Cauchy form}
								e_{k}=\frac{(-1)^k}{2\pi i}\int_{\Gamma} z^{-k}g(z)\frac{1}{z} dz,
							\end{equation} where $\Gamma$ is a circle centered at the origin with radius to be chosen later. We define the function $\phi_{k}(z)=\log g(z)-k\log z$, where $\log$ is the logarithm with the branch cut along the negative imaginary axis. Our goal is to find a saddle point $\sigma_{k}$ of $\phi_{k}$ and a circle $\Gamma$ centered at the origin passing through $\sigma_{k}$ such that \begin{enumerate}
								\item $\phi_{k}'(\sigma_{k})=0$ and $\phi_{k}''(\sigma_{k})>0$. 
								\item The contribution of the arc of $\Gamma$ of length $\phi_{k}''(\sigma_{k})^{-1/2}$ centered at $\sigma_k$ is $\exp\left(\phi_{k}(\sigma_{k})\right)\sqrt{2\pi/\phi_{k}''(\sigma_{k})}$. 
								
								\item The contributions from the remainder of $\Gamma$ is small.
							\end{enumerate} 
							
								
								The major simplification available compared to \cite{Pemantle-Subramanian2017} is that every sum we will consider converges absolutely and the saddle point will sit exactly on the negative real line. The derivative of $\phi_{k}$ is given by \begin{equation}\label{eq:phi' def}
									\phi_{k}'(z)=-\frac{k}{z}+\sum_{k=1}^{\infty}\frac{1}{z-r_k}=:-\frac{k}{z}+s(z).
								\end{equation} Thus, we are looking for $z\in\R$ such that \begin{equation}\label{eq:sigma goal}
									zs(z)=k,
								\end{equation} and $-z\rightarrow+\infty$.  Define the measure $\Pi_{g}=\sum_{k=1}^{\infty}\delta_{r_k}$, where $\delta_{r_k}$ is the point mass at $r_k$, and throughout assume $z=-r$ and large $r>0$.  We will use the asymptotic notation $\sim$ under the assumption that $r\rightarrow\infty$ or $k\rightarrow \infty$; we distinguish between these two limits with $\sim_{r}$ and $\sim_{k}$, respectively.  Integration by parts gives\begin{equation}\label{eq:integral example}
									\begin{aligned}
										-s(z)&=\int_{0}^{\infty} \frac{1}{r+t}d\Pi_{g}(t)\\
										&=\int_{0}^{\infty} r^{-2}\frac{1}{(1+t/r)^2}n(t) dt\\
										&=\int_{0}^{\infty}r^{-1}\frac{n(ru)}{(1+u)^2} du\\
										&\sim_{r} \left(\int_{0}^{\infty}\frac{u^{\rho}}{(1+u)^2}du \right)r^{-1+\rho}\widehat{h}(r).
									\end{aligned}
								\end{equation}  Hence, by \eqref{eq:calcint1}, $-rs(-r)\sim_{r} \pi\rho\csc(\pi\rho) r^{\rho}\widehat{h}(r)\sim_{r}\pi\rho\csc(\pi\rho)n(r)$, and by the intermediate value theorem,   \eqref{eq:sigma goal} has at least one solution $\sigma_{k}$ on the negative real line such that \Cr \begin{equation}\label{eq:sigma asy}
									\sigma_{k}\sim_{k} -\left(\frac{k}{c_{\rho}\widehat{h}(-\sigma_{k})}\right)^{1/\rho}=-\left(\frac{k}{\pi\rho\csc(\pi\rho)\widehat{h}(-\sigma_{k})}\right)^{1/\rho}\sim_{k} -\left[n_{+}^{-1}\left(\frac{k}{\pi\rho\csc(\pi\rho)}\right)\right]^{2},
								\end{equation} where the last line follows from noting that $n(r)=n_{+}(\sqrt{r})$. \nc For our purposes the uniqueness of $\sigma_{k}$ is immaterial, so we take any choice satisfying \eqref{eq:sigma asy}.  The higher derivatives of $s^{(j)}$ are \begin{equation}
									s^{(j)}(z)=(-1)^{j}(j)!\sum_{k=1}^{\infty} \frac{1}{(z-r_k)^{j+1}}.
								\end{equation} Following similarly to \eqref{eq:integral example} \begin{equation}\label{eq:s higher der}
									s^{(j)}(-r)\sim_{r} -(j+1)!\left(\int_{0}^{\infty}\frac{u^{\rho}}{(1+u)^{j+2}} \right)r^{-j-1+\rho}\widehat{h}(r).
								\end{equation} 
								
								We take $\Gamma$ in \eqref{eq:Cauchy form} to be a circle of radius $|\sigma_{k}|$ centered at the origin. Let $\Gamma_1=\{z:z=\sigma_{k}e^{i\theta},\ -k^{-\delta}\leq \theta\leq k^{-\delta} \}$ be a small arc around $\sigma_{k}$ for some fixed $\delta\in(1/3,1/2)$.  Define \begin{equation}
									v_k(\theta)=\phi_{k}(\sigma_{k}e^{i\theta}).
								\end{equation} It is straightforward to check using \eqref{eq:s higher der} that \begin{equation}\label{eq:v asymp}
									\begin{aligned}
										v'_k(0)=0,\quad
										v''_k(0)\sim_{k} k\left[-1+\frac{2\int_{0}^{\infty}\frac{u^\rho}{(1+u)^3}du}{\int_{0}^{\infty}\frac{u^{\rho}}{(1+u)^2}du } \right],\text{ and }
										\sup_{|\theta|\leq k^{-\delta} }	v'''_{k}(\theta)=O(k).	
									\end{aligned}
								\end{equation} From Lemma \ref{lem:calc_integrals}, it follows that \begin{equation}
									-1+\frac{2\int_{0}^{\infty}\frac{u^\rho}{(1+u)^3}du}{\int_{0}^{\infty}\frac{u^{\rho}}{(1+u)^2}du }=-1+\frac{\pi\rho(1-\rho)\csc(\pi\rho)}{\pi\rho\csc(\pi\rho)}=-\rho,
								\end{equation} and \begin{equation}\label{eq:v'' exact}
									v_{k}''(0)\sim_{k} -\rho k.
								\end{equation} \begin{lemma} We have
									\begin{equation}
										\frac{\int_{\Gamma_1}\frac{g(z)}{z^{k+1}}dz}{g(\sigma_{k})\sigma_{k}^{-k} (\rho k)^{-1/2}}\rightarrow i\sqrt{2\pi}
									\end{equation}
									as $k \to \infty$.
								\end{lemma}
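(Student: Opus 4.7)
The plan is to carry out a standard saddle-point/Laplace estimate in the parameterization $z = \sigma_k e^{i\theta}$ with $\theta \in [-k^{-\delta}, k^{-\delta}]$. Since $dz/z = i\,d\theta$ and $g(z)/z^k = e^{\phi_k(z)}$ on any branch of $\log$ continuous along $\Gamma_1$, the integral can be rewritten as
\begin{equation*}
\int_{\Gamma_1} \frac{g(z)}{z^{k+1}}\,dz \;=\; i\int_{-k^{-\delta}}^{k^{-\delta}} e^{v_k(\theta)}\,d\theta.
\end{equation*}

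The first step is to Taylor-expand $v_k$ around $\theta = 0$. By \eqref{eq:v asymp}, we have $v_k'(0)=0$, $v_k''(0) \sim -\rho k$, and $\sup_{|\theta|\le k^{-\delta}}|v_k'''(\theta)| = O(k)$, so Taylor's theorem yields $v_k(\theta) = v_k(0) + \tfrac{1}{2}v_k''(0)\theta^2 + R_k(\theta)$ with $|R_k(\theta)| = O(k\cdot k^{-3\delta}) = o(1)$ uniformly in $|\theta|\le k^{-\delta}$, thanks to the lower bound $\delta > 1/3$. Consequently $e^{v_k(\theta)} = e^{v_k(0)}e^{\tfrac{1}{2}v_k''(0)\theta^2}(1+o(1))$ uniformly on $\Gamma_1$, and $e^{v_k(0)} = g(\sigma_k)\sigma_k^{-k}$ independent of any branch choice.

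The second step is to evaluate the resulting Gaussian integral by substituting $u = \theta\sqrt{-v_k''(0)}$, with $-v_k''(0) \sim \rho k > 0$. The rescaled domain is $|u| \le \tau_k$ where $\tau_k \sim \sqrt{\rho}\,k^{1/2-\delta} \to \infty$ because $\delta < 1/2$, so dominated convergence gives $\int_{-\tau_k}^{\tau_k} e^{-u^2/2}\,du \to \sqrt{2\pi}$. Combining these estimates yields
\begin{equation*}
\int_{\Gamma_1} \frac{g(z)}{z^{k+1}}\,dz \;\sim\; i\sqrt{2\pi}\, g(\sigma_k)\sigma_k^{-k}(\rho k)^{-1/2},
\end{equation*}
which is exactly the desired limit.

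The only substantive obstacle is controlling the cubic Taylor remainder uniformly on $\Gamma_1$, and this is precisely why the interval $\delta \in (1/3, 1/2)$ has been calibrated as it is: the lower bound $\delta > 1/3$ kills the cubic error $O(k\theta^3)$ in the exponent, while the upper bound $\delta < 1/2$ is what allows the rescaled Gaussian integral to exhaust all of $\mathbb{R}$ and capture the full mass $\sqrt{2\pi}$. A minor bookkeeping point is to check that the branch of $\log$ defining $\phi_k$ is continuous along $\Gamma_1$, which is automatic since $\Gamma_1$ is a small arc around the negative real point $\sigma_k$, lying away from the prescribed branch cut on the negative imaginary axis.
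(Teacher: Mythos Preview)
Your proposal is correct and follows essentially the same saddle-point argument as the paper: parametrize $\Gamma_1$ by $\theta$, Taylor-expand $v_k$ to second order using \eqref{eq:v asymp} with the cubic remainder controlled via $\delta>1/3$, and evaluate the resulting Gaussian integral by the substitution $u=\theta\sqrt{-v_k''(0)}$ using $\delta<1/2$. Your write-up is in fact slightly more explicit than the paper's about why the window $\delta\in(1/3,1/2)$ is calibrated as it is and about the branch of $\log$ along $\Gamma_1$.
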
 
								
								\begin{proof}
									Following an approach similar to \cite{Pemantle-Subramanian2017}, we parametrize $\Gamma_1$ in terms of the angle $\theta$ and Taylor expand $v_{k}$ up to third order using \eqref{eq:v asymp} to see that \begin{equation}
										\int_{\Gamma_1}\frac{g(z)}{z^{k+1}}dz=i\int_{-k^{-\delta}}^{k^{-\delta}}\exp\left(v_{k}(0)+v_{k}'(0)\theta+v_{k}''(0)\frac{\theta^2}{2} \right)d\theta(1+o(1)),
									\end{equation} where $\exp\left(v_{k}(0) \right)=g(\sigma_{k})\sigma_{k}^{-k}$ and $v_{k}'(0)=0$. For the final term we use the substitution $w=\theta\sqrt{-v_{k}''(0)}$ to see that \begin{equation}
										\int_{-k^{-\delta}}^{k^{-\delta}}\exp\left(v_{k}''(0)\frac{\theta^2}{2}\right)d\theta=\frac{1}{\sqrt{-v_k''(0)}}\int_{-\sqrt{-v_k''(0)}k^{-\delta}}^{\sqrt{-v_k''(0)}k^{-\delta}}\exp\left(-\frac{w^2}{2}\right)dw.
									\end{equation} The proof then follows from \eqref{eq:v'' exact}.
								\end{proof}

								Let $\Gamma_{2}=\Gamma\setminus\Gamma_1$. For $z\in\Gamma_{2}$ \begin{equation}\label{eq:real part of a circle}
									\frac{\Re(z)}{|\sigma_{k}|}>\cos(\pi\pm k^{-\delta})=-1+\frac{k^{-2\delta}}{2}+O\left( k^{-4\delta}\right).
								\end{equation} In the remainder of the proof we aim only to control the relative size of $g(\sigma_{k})$. Thus for convenience we use the principal branch, $\ln$, of the logarithm below.   Integrating $\Re\phi_{k}'$ along the arc from $\sigma_{k}$ to $\sigma_{k}e^{ik^{-\delta}}$ and using \eqref{eq:real part of a circle}  we see that \begin{equation}
									\begin{aligned}
										\ln \left|g\left(\sigma_{k}e^{ik^{-\delta}}\right)\right|-\ln g(\sigma_{k})&\lesssim -k^{1-2\delta}.
									\end{aligned}
								\end{equation} Thus, $\sup_{z\in \Gamma_{2}}\frac{|g(z)|}{g(\sigma_{k})}=O(\exp(-k^{\eps}) )$ for some sufficiently small $\eps>0$.

								It then follows that \begin{equation}\label{eq:elem asy}
									e_{k}=(-1)^{k}g(\sigma_{k})\sigma_{k}^{-k}(2\pi\rho k)^{-1/2}(1+o(1)).
								\end{equation}  Additionally, note that \begin{equation}
									\frac{d}{dz}\ln g(z)=s(z),
								\end{equation} and $\sigma_{k}$ is defined to be a solution to \begin{equation}
									s(\sigma_{k})=\frac{k}{\sigma_{k}}.
								\end{equation} One can see that\begin{equation}
									(k+1)\left[\frac{\sigma_{k}}{\sigma_{k+1}}-1 \right]\geq \ln g(\sigma_{k})-\ln g(\sigma_{k+1})
									\geq k\left[1-\frac{\sigma_{k+1}}{\sigma_{k}} \right].
								\end{equation} It then follows that \begin{equation}\label{eq:max of log g}
									\begin{aligned}
										\ln g(\sigma_{k})-\ln g(\sigma_{k+1})+k\ln|\sigma_{k}|-k\ln|\sigma_{k+1}|=o(1).
									\end{aligned}
								\end{equation} 
								

									We can then apply \eqref{eq:sigma asy}, \eqref{eq:elem asy}, and  \eqref{eq:max of log g} to see 
									\begin{equation}\label{eq:ratio of elem final}
										\begin{aligned}
											\frac{e_k}{e_{k+1}}&\sim_{k} -\frac{g(\sigma_{k})\sigma_{k}^{-k}(2\pi\rho k)^{-1/2}}{g(\sigma_{k+1})\sigma_{k+1}^{-k-1}(2\pi\rho (k+1))^{-1/2}}\\
											&\sim_{k}- \frac{g(\sigma_{k})}{g(\sigma_{k+1})} \left(\frac{\sigma_{k+1}}{\sigma_{k}} \right)^{k}\sigma_{k+1} \\
											&\sim_{k} \Cr \left[n_{+}^{-1}\left(\frac{k+1}{\pi\rho\csc(\pi\rho)}\right)\right]^{2}.\nc
										\end{aligned}
									\end{equation} The proof of \eqref{eq:coeff ratio} is then completed by noting $e_{k}=(-1)^{k}\gamma_{2k}/(2k)!$.
									As mentioned at the beginning of the section, \eqref{eq:ratio of coeff ratio condition} follows from \eqref{eq:coeff ratio}, so the proof of Lemma \ref{lem:coefficients from roots} is completed.								
								\end{proof}
								\section{Examples} \label{sec:examples} In this section, we give some examples of polynomials and entire functions which satisfy the assumptions of our main results. Of course the most important polynomial examples for our purposes are the even Jensen polynomials of functions satisfying Assumption \ref{assump:root density}. As we have already discussed in Section \ref{sec:finite free limit theorems}, random polynomials with iid roots and characteristic polynomials of random matrices serve as examples existing in the literature. We now discuss a few other examples.
								
								We choose the 130 degree polynomial, $p_{130}$, with roots placed, somewhat arbitrarily at $\left( ( (-26,26) \cap \Z/2 ) \cup ( (-27,27) \cap ((\Z/2)^{3/2} +\{3/2\})  ) \right)\setminus [-2,0] $, where $(\Z/2)^{3/2}$ denotes all numbers of the form $\pm(|k|/2)^{3/2}$ for some $k \in \Z$ and $+$ denotes the Minkowski sum.
								In Figure \ref{plot-F}, we plot the $114^{th}$, $122^{nd}$, and $126^{th}$ derivative of $p_{130}$, after rescaling $p_{130}$ so the its empirical root measure has mean $0$ and variance $130$, and normalizing the derivative to make it a monic polynomial. We also plot the corresponding Hermite polynomials.

								\begin{figure}[t] 
									\centering
									\begin{subfigure}{.45\textwidth}
										\centering
										\includegraphics[width=\linewidth]{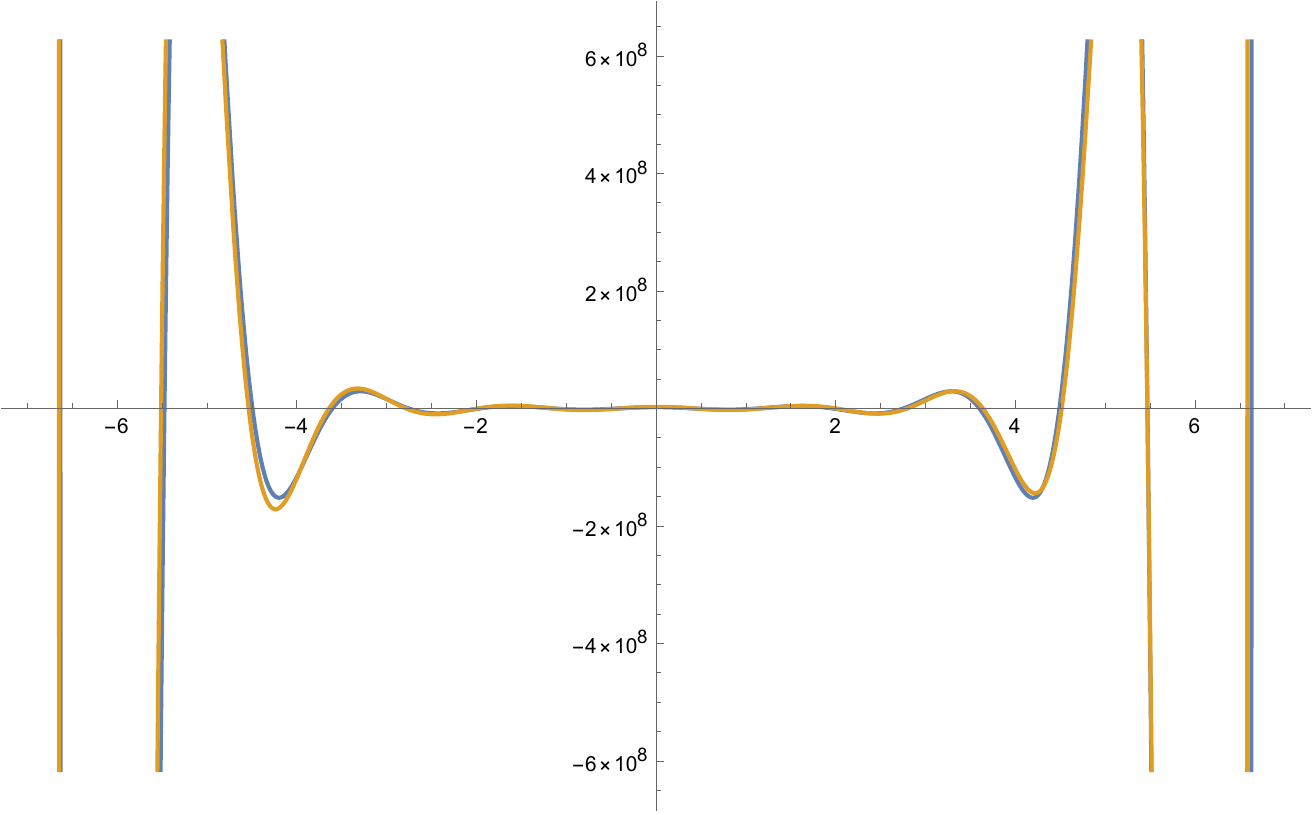}
										\caption{$114^{th}$ derivative}
										\label{poly16}
									\end{subfigure}%
									\hspace{0.5cm}
									\begin{subfigure}{.45\textwidth}
										\centering
										\includegraphics[width=\linewidth]{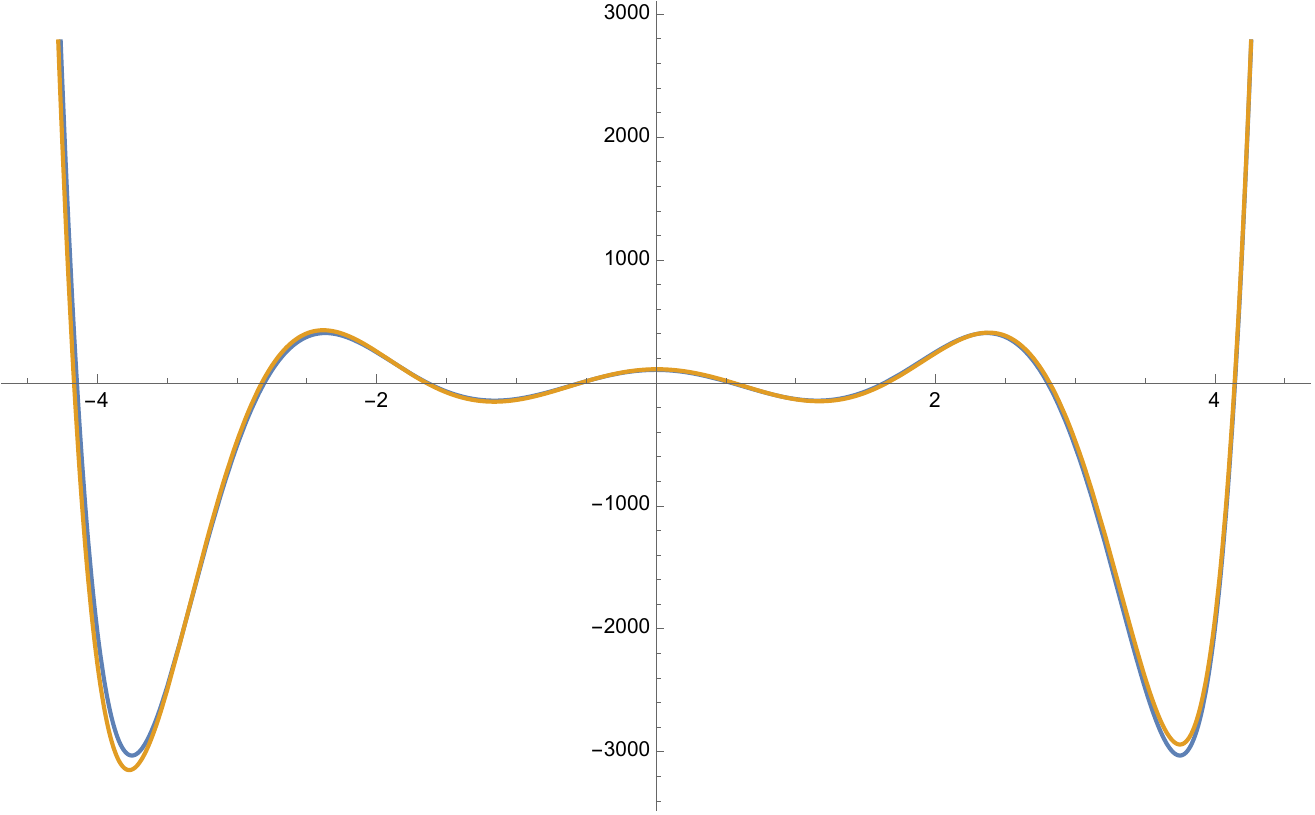}
										\caption{$122^{nd}$ derivative}
										\label{poly8}
									\end{subfigure}\\
									\begin{subfigure}{.45\textwidth}
										\centering
										\includegraphics[width=\linewidth]{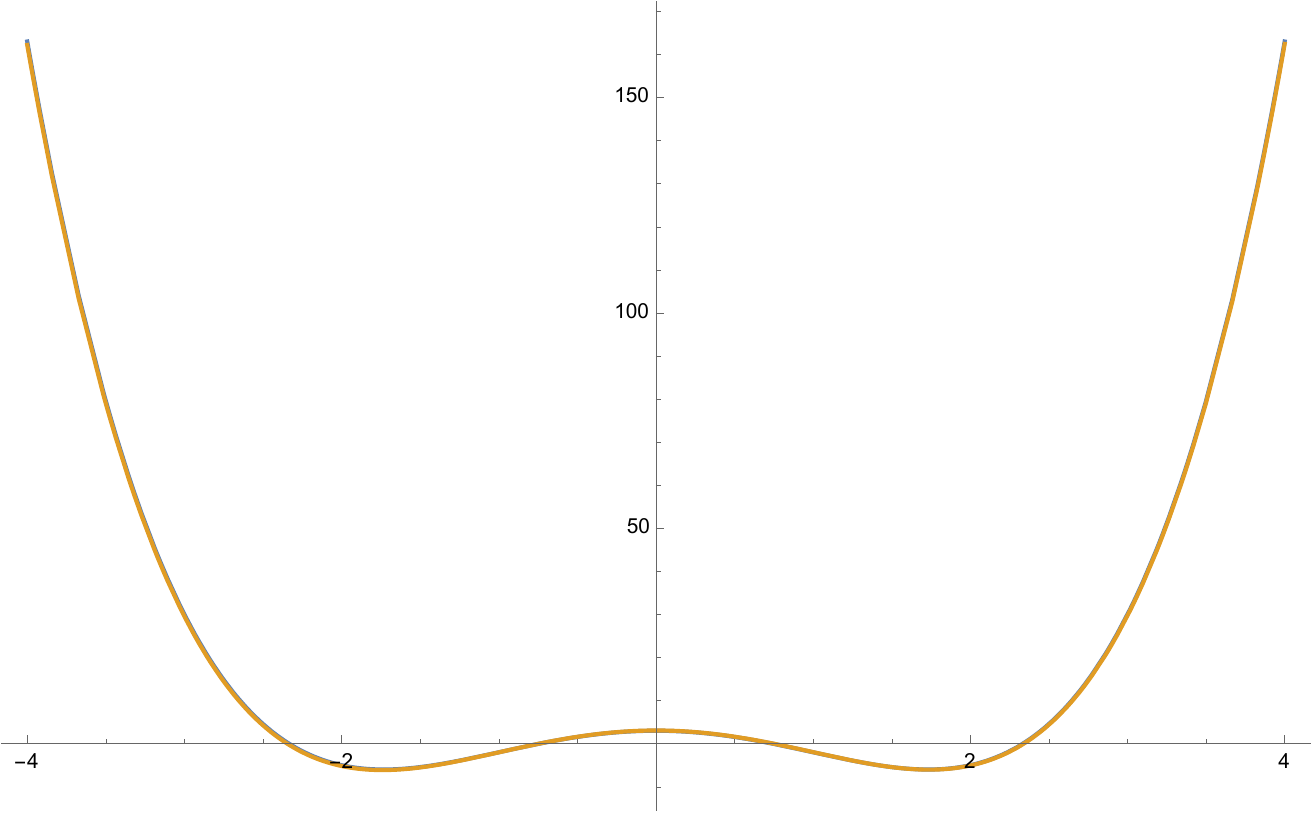}
										\caption{$126^{nd}$ derivative}
										\label{poly4}
									\end{subfigure}
									\caption{Derivatives of $p_{130}$ (blue) described in Section \ref{sec:examples} compared to the Hermite polynomials (orange).}
									\label{plot-F}
								\end{figure} 
								
								For entire functions, we first follow the lead of \cite{Pemantle-Subramanian2017}. Let $\beta\in(0,1)$ and $N_{\beta}=\{y_{j}\}_{j=1}^\infty$ be a Poisson point process with intensity measure having density $\beta x^{-(1+\beta)}$ on $(0,\infty)$. We define the entire function $g_{\beta}$ as \begin{equation}
									g_{\beta}(z)=\prod_{k=1}^{\infty} \left(1-y_{k}z\right).
								\end{equation} Then, one can check using standard properties of the Poisson point processes $N_{\beta}$ (see for example \cite{Davydov-Egorov2006,LePage-Woodroofe-Zinn1981}) that the random entire function $f_{\alpha}(z)=g_{\alpha/2}\left(z^{2}\right)$ almost surely satisfies Assumption \ref{assump:root density} for any $\alpha\in (0,2)$. Hence, Cosine Universality holds almost surely for $f_{\alpha}$, as does Hermite and Laguerre Universality for its Jensen polynomials  via Theorems \ref{thm:Cosine uni}, \ref{thm:Laguerre uni}, and \ref{thm:Hermite uni}. The case $\alpha=1$ is exactly an even version of the random entire function considered in \cite{Pemantle-Subramanian2017}. 
								
								For a deterministic example, we consider the Bessel function of the first  kind  $J_{\nu}$. There are multiple definitions for $J_{\nu}$, and we consider the series definition \begin{equation}
									J_{\nu}(z)=\left( \frac{1}{2}z\right)^{\nu}\sum_{k=0}^{\infty}\frac{(-1)^k}{4^{k}k!\Gamma(\nu+k+1)}z^{2k}
								\end{equation}  for some $\nu\geq 0$. For this choice of $\nu$ the zeros of $J_{\nu}$ are all real. We remove the (potential) branch point at $z=0$ and consider the functions \begin{equation}
									\widetilde{J}_{\nu}(z)=\sum_{k=0}^{\infty}\frac{(-1)^k}{4^{k}k!\Gamma(\nu+k+1)}z^{2k}.
								\end{equation} It is known \cite[Chapter 10.21]{NIST:DLMF} that if $j_{k,\nu}$ is the $k^{th}$ positive root of $\widetilde{J}_{\nu}$, then \begin{equation}\label{eq:Bessel root growth}
									j_{k,\nu}\sim \pi\left(k+\frac{\nu}{2}-\frac{1}{4} \right)
								\end{equation} as $k \to \infty$.  Assumption \ref{assump:root density} follows from \eqref{eq:Bessel root growth}. Thus, Cosine, Hermite, and Laguerre Universality hold for $\widetilde{J}_{\nu}$ and its even Jensen polynomials  via Theorems \ref{thm:Cosine uni}, \ref{thm:Laguerre uni}, and \ref{thm:Hermite uni}. 
								
								We conclude our examples by mentioning the work of Assiotis \cite{Assiotis2022} where random functions in the Laguerre--P\'olya class  are expressed as the limit of characteristic polynomials of unitarily invariant random Hermitian matrices. First, many of the examples discussed in \cite{Assiotis2022} could be taken as our choice of function $g(z)$ in the relation $f(z)=g\left(z^2\right)$, and our results would hold almost surely. Second, our work presents a different connection between random matrices and the Laguerre--P\'olya class. While \cite{Assiotis2022} considers scaling limits to random functions, we instead use finite free probability (where one averages over unitarily invariant random matrix ensembles) to provide a deterministic application to the Laguerre--P\'olya class. While we do not explore possible deeper connections between \cite{Assiotis2022} and our work here, we point out the work of Gorin and Marcus \cite{Gorin-Marcus2020} connecting $\beta$-ensembles in random matrix theory (which are much of the motivation for \cite{Assiotis2022}) and finite free probability.

	\bibliography{CosineUniv}
	\bibliographystyle{abbrv}

\end{document}